\documentclass{amsart}
\usepackage{enumerate}
\usepackage[dvips]{graphicx}
\usepackage{color}
\usepackage{pstricks}
\usepackage{amssymb}
\usepackage{pst-node}
\newtheorem{theo}{Theorem}[section]
\newtheorem{cor}[theo]{Corollary}
\newtheorem{lem}[theo]{Lemma}

\newtheorem{prop}[theo]{Proposition}

\newtheorem{defn}[theo]{Definition}

\theoremstyle{definition}

\def\a{{\a}}
\def\a{{\mathfrak a}}

\def\C{\mathbb C}

\def\E{\mathbb E}

\def\N{\mathbb N}
\def\P{\mathbb{P}}

\def\R{\mathbb R}

\def\Z{\mathbb{Z}}

\DeclareMathOperator{\Tr}{tr}
\DeclareMathOperator{\TP}{P}

\DeclareMathOperator{\ch}{ch}
\begin{document} 
\title{Fusion coefficients and random walks in alcoves}
\author{Manon Defosseux}
\address{Laboratoire de Math\'ematiques Appliqu\'ees \`a Paris 5, Universit\'e Paris 5, 45 rue des  Saints P\`eres, 75270 Paris Cedex 06.}
\email{manon.defosseux@parisdescartes.fr}

\maketitle
\begin{abstract}  We point out  a connection between fusion coefficients and random walks in a fixed level alcove associated to the root system  of an affine Lie algebra and use this connection to solve completely the Dirichlet problem on such an alcove for a large class of simple random walks. We establish a correspondence between the hypergroup of  conjugacy classes of a compact Lie group and the fusion hypergroup. We prove that a random walk in an alcove, obtained with the help of fusion coefficients, converges, after a proper normalization, towards the radial part of a Brownian motion on a compact Lie group. 
\end{abstract}
\section{Introduction} 
In the early nineties Ph.\ Biane pointed out relations between representation theory of semi-simple complex Lie algebras and random walks in a Weyl chamber associated to a root system of such an algebra (see for instance \cite{Biane}). Actually, random walks in a Weyl chamber are obtained considering the hypergroup of characters of a semi-simple complex Lie algebra, with structure constants given by the Littlewood-Richardson coefficients.  A Weyl chamber is a fundamental domain for the action of a Weyl group  associated to a root system. If we consider an affine Lie algebra, which is an infinite dimensional Kac-Moody algebra, a fundamental domain for the action of the Weyl group associated to its (infinite) root system is a collection of level $k$ alcoves, $k\in \N$. Thus it is a natural question to ask if random walks in alcoves are related to representation theory of infinite dimensional Lie algebras.  There are several ways to answer. A first one could be to consider tensor products of highest weight representations of an affine Lie  algebra. One would obtain random walks in alcoves with increasing level at each time. This approach has to be related to the very recent  paper \cite{LLP}. A second one is to consider the so-called fusion product. In that case, one obtains random walks living in an alcove with a fixed level. This is this approach that we develop in this paper.  Fusion coefficients can be seen as the structure  constants of the hypergroup of the discretized characters of irreducible representations of a semi-simple Lie algebra (see \cite{Wildberger} and references therein). Following an idea of Ph.\ Bougerol\footnote{Private communication.} we point out that  random walks in an alcove are related to such an  hypergroup.  Thus one answers positively to the question explicitly formulated in \cite{Grabiner} : does it exist a link between representation theory and random walks in alcoves ?    In particular one can completely solve the discrete Dirichlet problem on an alcove, for a large class of simple random walks, as P. H. Berard did in \cite{Berard} in a continuous setting, which is important to obtain, for instance, precise asymptotic results. Thus we get a very natural new integrable probabilistic model, i.e a probabilistic object which can "be viewed as a projection of a much more powerful object whose origins lie in representation theory" \cite{Borodin}. We obtain in addition a better understanding of some previous results  concerning random walks in alcoves. Actually, the restriction to a classical alcove of the Markov kernel of most of reflectable random walks considered in \cite{Grabiner} is given by fusion coefficients. This is due to the fact that these reflectable random walks are mostly related to minuscule representations of classical compact Lie groups and that in these cases fusion coefficients give the number of walks remaining in an alcove.  In \cite{Grabiner} Grabiner   is interested in a class of reflectable walks, for which Gessel and Zeilberger have shown a Karlin-MacGregor type formula in \cite{Gessel}.  In our perspective, this formula has to be related to a Karlin-MacGregor type formula which holds for the so-called fusion coefficients.  

A random walk on a Weyl chamber converges after a proper normalization towards a Brownian motion on a Weyl chamber, which can also be realized as the radial part of a Brownian motion in a semi-simple complex Lie algebra. It is maybe enlightening   to notice that the orbit method of Kirillov provides a kind of intermediate between the discrete and the continuous objects. It establishes in particular a relation between convolution on a Lie algebra and tensor product  of its representations. Taking an appropriate sequence of convolutions on a Lie algebra one obtains by a classical central limit theorem a chain of correspondences between random walks in a Weyl chamber, tensor product of representations, convolution on a Lie algebra and Brownian motion in this Lie algebra.  
We establish that convolution on a connected compact Lie group involves fusion product of irreducible representations. We prove that a random walk obtained considering the   fusion hypergroup converges after a proper normalization towards the radial part of a Brownian motion in a compact Lie group.  Thus, the paper should be read keeping in mind the following informal chain of correspondences. 
\begin{center}
\begin{tabular}{cccccccp{0.5cm}}
  Random walk in
  & $\sim$ &Fusion   &$\sim$&   Random walk in  &$\sim$ &Brownian motion\\
  an alcove & & product & & a compact group & & in a compact group.
\end{tabular} 
\end{center}
\bigskip

The paper is organized as follows. Basic definitions and notations related to representation theory of semi-simple complex Lie algebras are introduced  in section \ref{Basic notations and definitions}. The fusion coefficients are defined in section \ref{Fusion}. We define in section \ref{Markovchain} random walks in an alcove considering the hypergroup of the so-called discretized characters of irreducible representations of a semi-simple complex Lie algebra, with structure constants given by  fusion coefficients. Moreover we show how the discretized characters provide a complete solution to a Dirichlet problem in an alcove for a large class of simple  random walks. We indicate precisely in section \ref{applications} how most of simple random walks considered in \cite{Grabiner} and \cite{Krattenthaler} appear naturally in this framework.  We explain in section \ref{Orbitmethod} how the fusion product is related to convolution on  a   compact Lie group. We established in section \ref{Brownian} a convergence towards the radial part of a Brownian motion in a compact Lie group.   

Note that a discrete Laplacian on Weyl alcoves has been introduced in \cite{Vandiejen} in a more general framework of double affine Hecke algebras. The Bethe Ansatz method  is employed to find eigenfunctions, which are proved to be the periodic Macdonald spherical functions. Even if the underlying Markov processes are the same as ours, his approach is quite different. We hope that ours, which explicitly involves the fusion hypergroup, is enlightening in a sense that fusion coefficients are proved to play the same role for random walks in an alcove as the Littlewood Richardson coefficients for random walks in a Weyl chamber.

\textit{Acknowlegments:} The author would like to thank Ph.\ Bougerol for having made her know   the fusion product and its beautiful probabilistic interpretation. 

\section{The case of $SU(2)$} In order to facilitate the lecture of the paper we first begin to detail how the simplest example of random walk in an alcove has to be related to fusion coefficients. 
 Let $k\in \N^*$ and $T=\{0,\dots, k\}$. We consider the simple random walk $(X(n))_{n\ge 0}$ on $\Z$ with transition kernel $\TP$ defined by  $\TP(x,y)=\frac{1}{2}1_{\vert x-y\vert =1}$, for $x,y\in \Z$. For $f:T\to \R$, we let $\Delta f = \TP f-f$. The discrete Dirichlet problem consists in finding eigenvalues  $\lambda$ and eigenfunctions $f$ defined on $T\cup\partial T$ satisfying 
$$\left\{
    \begin{array}{ll}
       \Delta f+\lambda f=0 &\mbox{ on } T \\
        f=0  &\mbox{ on } \partial T,
    \end{array}
\right.$$
where $\partial T=\{-1,k+1\}$. It is a consequence of the Perron-Frobenius theorem that the smallest eigenvalue is positive, simple and  that the corresponding eigenfunction can be chosen positive on $T$. Such a function is said to be a Perron-Frobenius eigenfunction. The eigenfunctions corresponding to the other eigenvalues change of sign  on $T$.  An easy computation shows that the eigenvalues of the Dirichlet problem  are $1-\frac{1}{2}\chi_1(m)$, for $m\in\{0,\dots,k\}$, with corresponding eigenfunctions $f_m$ defined by $f_m(i)=\chi_i(m)$, $i\in T\cup \partial T$, where 
  $$\chi_i(m)=\frac{\sin(\pi\frac{(i+1)(m+1)}{k+2})}{\sin(\pi\frac{m+1}{k+2})}.$$
  For $m=0$, one gets a Perron-Frobenius eigenfunction. Actually the $\chi_i$'s are the so-called discretized character of the Lie algebra $\mathfrak{sl}_2(\C)$. The fact that they provide a solution to the Dirichlet problem  comes from the fact that here  the restriction of the Markov kernel P to $T$ is the sub-stochastic matrix $(\frac{1}{2}N_{i1}^j)_{0\le i,j\le k}$ where the $N_{i1}^k$'s are level $k$ fusion coefficients of type $A_1^{(1)}$.  Let us say how    the asymptotic for the number of walks in the alcoves  obtained in \cite{Krattenthaler}  by Krattenthaler using the explicit formulas of Grabiner, follows immediately in our framework.  Classically, we define a Markov kernel $\hat{\TP}$ letting $$\hat{\TP}(x,y)=\frac{\chi_y(0)}{\frac{1}{2}\chi_1(0)\chi_x(0)} \TP_{\vert T}(x,y).$$ As $T$ is supposed to be bounded, there exists a unique $\hat{\TP}$-invariant probability measure on each communication class of $\hat{\TP}$ and the solution of the Dirichlet problem leads in particular to an estimation of the number of walks with initial state $x$, remaining in $T$ and ending at $y$ after $n$ steps for large $n$.  Actually one can show that the measure $\pi$ defined on $T$ by $$\pi(i)=\frac{2}{2+k}\sin^2(\pi\frac{i+1}{k+2}),$$ $i\in T$, is a $\hat{\TP}$-invariant probability measure. As the simple random walk is irreducible with period equals $2$, one obtains the following estimation for large $n$  $$ P_{\vert T}^{2n+r}(x,y)\sim \frac{4}{2+k} (\frac{1}{2}\chi_1(0))^{2n+r}\sin(\pi\frac{x+1}{k+2})\sin(\pi\frac{y+1}{k+2}),$$
  where $r=0$ when $y-x\in 2\Z$ and $r=1$ otherwise.
 
  \section{Basic notations and definitions}\label{Basic notations and definitions}  Let $K$ be a simple, connected and compact Lie group with Lie algebra $\mathfrak{k}$ and complexified Lie algebra  $\mathfrak{g}$.   We choose a maximal torus $T$ of $K$ and denote  by $\mathfrak{t}$ its Lie algebra. 
 We consider  the set of real roots $$R=\{\alpha\in \mathfrak{t}^*: \exists X \in \mathfrak{g}\setminus \{ 0 \},\, \forall H\in \mathfrak{t} ,\, [H,X]=i\alpha (H)X\}.$$ We choose  the set $\Sigma$ of simple roots of $R$ and denote by $R_+$ the set of positive roots.  The half sum of positive roots is denoted by $\rho$. The dual coxeter number denoted by  $h^\vee$ is equal to $1+\rho(\theta^\vee)$, where $\theta$ is the highest root.  Letting for $\alpha\in R$, $$\mathfrak{g}_\alpha=\{X\in \mathfrak{g}: \, \forall H\in \mathfrak{t},\, [H,X]=i\alpha(H)X\},$$ the coroot $\alpha^\vee$  of $\alpha$ is defined to be the only vector of $\mathfrak{t}$ in $[\mathfrak{g}_\alpha,\mathfrak{g}_{-\alpha}]$ such that $\alpha(\alpha^\vee)=2$. We denote respectively by $Q$ and $Q^\vee$ the root and the coroot lattice.      The  weight lattice $ \{\lambda\in \mathfrak{t}^*: \lambda(\alpha^\vee)\in \mathbb{Z}\} $ is denoted by $P$. We equip $\mathfrak{k}$ with a   $K$-invariant inner product $(.\vert.)$, normalized such that $(\theta^\vee\vert\theta^\vee)=2$. The linear isomorphism 
\begin{align*}
\nu:\, \,&\mathfrak{k}\to \mathfrak{k}^*, \\
& h\mapsto (h\vert.)
\end{align*} identifies $\mathfrak{k}$ and $\mathfrak{k}^*$. We still denote by $(.\vert.)$ the induced inner product   on $\mathfrak{k}^*$.  Note that the normalization implies $\nu(\theta^\vee)=\theta$.   
 The irreducible representations of $\mathfrak{g}$ are parametrized by the set of dominant weights $P_+=P\cap \mathcal{C}$, where  $\mathcal{C}$ is the  Weyl chamber $\{\lambda\in \mathfrak{t}^*: \langle\lambda,\alpha^\vee \rangle \ge 0 \textrm{ for all } \alpha\in \Sigma\}$.  Let $V_\lambda$   be the irreducible  representation of $\mathfrak{g}$ with highest weight $\lambda\in P_+$ and $\mbox{ch}_\lambda$ be the character of this representation. It is defined by
 $$\mbox{ch}_\lambda=\sum_{\beta\in P}K_\lambda^\beta e^{\beta},$$
 where $e^{\beta}$ is defined on $\mathfrak{t}$ by  $e^{\beta}(x)=e^{2i\pi\beta(x)},$ for  $x\in \mathfrak{t}$, and  $K_{\lambda}^\beta$ is the dimension of the $\beta$-weight space of $V_\lambda$. We denote by $\dim(\lambda)$ the dimension of the representation $V_\lambda$, i.e.\ $\dim(\lambda)=\mbox{ch}_\lambda(0)$.    We have the following Weyl dimension formula (see for instance \cite{Humphreys}). 
  $$\dim(\lambda) =\prod_{\alpha\in R_+}\frac{(\alpha+\rho,\lambda)}{(\rho,\alpha)}.$$
 The Weyl character formula states that for any $x\in \mathfrak{t}$, 
  $$\mbox{ch}_\lambda(x)=\frac{1}{\prod_{\alpha\in R_+}(1-e^{-2i\pi\alpha(x)})}\sum_{w\in W} \det(w) e^{2i\pi\langle w.(\lambda+\rho)-\rho,x\rangle},$$
  where $W$ is the Weyl group i.e.\  the subgroup  of $GL(\mathfrak{t}^*)$ generated by fundamental  reflections $s_\alpha$, $\alpha\in \Sigma$, defined by $s_\alpha(\beta)=\beta-  \beta(\alpha^\vee) \alpha$, $\beta\in \mathfrak{t}^*$.     When $\lambda$ is not dominant, we let $\ch_\lambda=\mbox{det}(w)\ch_\mu$ if $w(\mu+\rho)=\lambda+\rho$ for $\mu$ a dominant weight.  The Weyl character formula  remains obviously true for a non-dominant weight.
  
 \noindent The Littlewood-Richardson  coefficients $M_{\lambda,\gamma}^{\beta}$, for $\lambda,\gamma,\beta\in P_{+}$,  are defined to be the unique integers such that for every $x\in \mathfrak{t}$
\begin{align}
&\mbox{ch}_\lambda(x)\mbox{ch}_\gamma(x)=\sum_{\beta\in P_+}M_{\lambda,\gamma}^{\beta}\mbox{ch}_ \beta(x). \label{LR}
\end{align}
\section{ Fusion coefficients}\label{Fusion} For every $y \in \mathfrak{t}^*$, we write $t_y$ for the translation defined  on $\mathfrak{t}^*$ by $t_y(x)=x+y$, $x\in \mathfrak{t}^*$. For $k\in \N^*$, we consider  the group $W_k$ generated by $W$ and the translation $t_{(k+h^\vee)\theta}$. Actually $W_k$ is the semi-direct product $W\ltimes T_{(k+h^\vee)M}$, where $M=\nu(Q^\vee)$ and $T_{(k+h^\vee)M}=\{t_{(k+h^\vee)x}: x\in M\}$. Thus for $w\in W_k$,  one can define $\det(w)$ as the determinant of the linear component of $w$.   The fundamental domain for the action of $W_k$ on $\mathfrak{t}^*$ is 
$$A_k=\{\lambda\in \mathfrak{t}^*: \lambda(\alpha_i^\vee)\ge 0\textrm{ and }  \lambda(\theta^\vee)\le k+h^\vee\}.$$
Let us introduce the subset $P^k_+$ of $P_+$ defined by $$P^k_+=\{\lambda\in P_+ : \lambda(\theta^\vee)\le k\},$$
and the subset $\mathcal{C}^k$ of $\mathcal{C}$ defined by
$$\mathcal{C}^k=\{\lambda \in \mathcal{C}:  \lambda(\theta^\vee)\le k\}.$$ 
$P_+^k$ is called the level $k$ alcove. The level $k$ fusion coefficients $N^\beta_{\lambda,\gamma}$, for $\lambda,\gamma,\beta\in P_+^k$, are defined to be the unique non negative integers such that 
\begin{align} \forall \sigma\in P_+^k, \quad \chi_{ \lambda }(\sigma)\chi_{ \gamma }(\sigma)=\sum_{\beta\in P_+^k}N_{\lambda,\gamma}^\beta \chi_{ \beta }(\sigma).\label{FP}
\end{align}
where $\chi_\lambda$ is the level $k$ discretized character, which is defined by
$$\chi_\lambda(\sigma)=\mbox{ch}_{ \lambda }\big(-\nu^{-1}(\frac{\sigma+\rho}{k+h^\vee})\big), \, \sigma\in P_+^k.$$
The Weyl character formula shows that for any $\lambda\in P$ and  $w\in W_k$
\begin{align}\label{alt}
\chi_{w(\lambda+\rho)-\rho}=\det(w)\chi_\lambda,
\end{align}
which implies in particular  that $\chi_\lambda=0$ if $(\lambda+\rho)$ is on a wall $\{x\in \mathfrak{t}^*: x(\alpha^\vee)=0\}$ for some $\alpha\in \Sigma$, or on the wall $\{x\in  \mathfrak{t}^* : x(\theta^\vee)=k+h^\vee\}$. 
Unicity of the fusion coefficients follows from the fact - proved for instance in \cite{Kac} - that the vectors $\{(\chi_\beta(\sigma))_{\sigma\in P_+^k},\, \beta\in P_+^k\}$ are orthogonal with respect to the   measure defined in proposition \ref{invprob}.  The non negativity of the fusion coefficients is not clear from this definition, which is the one given in \cite{Kac}. Nevertheless, fusion coefficients can be seen as multiplicities in the decomposition of some "modified products" of representations : the truncated Kronecker product, appearing in the framework of representations of quantum groups, and the fusion product, defined in the framework  of representations of affine Lie algebras. In these frameworks, the non negativity of the fusion coefficients follows from the definition (see for instance \cite{Fuchs}). Moreover, they are proved to satisfy the following inequality, which we'll be useful for the last section. For any $\lambda,\gamma,\beta\in P_+^k$,
\begin{align}\label{trunc}
N_{\lambda,\gamma}^\beta\le M_{\lambda,\gamma}^\beta.
\end{align}
 It follows for instance from identities (16.44) and (16.90) in \cite{Difrancesco}. Note that we have also the following inequality
 $$M_{\lambda,\gamma}^{\beta}\le K_{\gamma}^{\beta-\lambda}.$$
 It follows for instance from the Littelmann path model for tensor product of irreducible representations (see \cite{Littelmann}). 
 
 \section{Markov chains on an alcove}\label{Markovchain} 
 Let $\gamma\in P_+^k$. From a probabilistic point of view, discretized characters provide, by definition of the fusion coefficients, a basis of eigenvectors of the  sub-stochastic matrix $$(\frac{1}{\dim(\gamma)}N_{\lambda,\gamma}^\beta)_{\lambda,\beta\in P_+^k}.$$ 
 Actually for $\sigma\in P_+^k$, $\frac{1}{\dim(\gamma)}\chi_{\gamma}(\sigma)$ is an  eigenvalue  with a  corresponding eigenvector $(\chi_{\beta}(\sigma))_{ \beta \in P_+^k}.$
For $\lambda\in P_+^k$, $\chi_\lambda(0)$ is a non negative real number.  Actually we have the following formula (see for instance \cite{Kac}). 
\begin{align}\label{DF1}
\chi_\lambda(0)=  \prod_{\alpha\in R_+}\frac{\sin\big(\pi\frac{(\lambda+\rho\vert\alpha)}{k+h^\vee}\big)}{\sin\big(\pi\frac{(\rho\vert\alpha)}{k+h^\vee}\big)} .
\end{align}
The quantity $\chi_\lambda(0)$ is the so-called asymptotic dimension, which appears naturally in the framework of highest weight representations of affine Lie algebras. 
Let $\gamma\in P_+^k$. We define a Markov kernel $q_\gamma$ on $P^k_+$ by letting
\begin{align}\label{defQ}
q_\gamma(\lambda,\beta)=N_{\lambda,\gamma}^\beta\frac{\chi_\beta(0)}{\chi_\lambda(0)\chi_\gamma(0)}, \, \textrm{ for } \lambda,\beta \in P_+^k.
\end{align}  
In other words $q_\gamma$ is defined by  the formula 
\begin{align}\label{qgamma}
\frac{\chi_{ \lambda }(\sigma)}{\chi_{ \lambda }(0)}\frac{\chi_{ \gamma }(\sigma)}{\chi_{ \gamma }(0)}=\sum_{\beta\in P_+^k}q_\gamma(\lambda,\beta)\frac{\chi_{ \beta }(\sigma)}{\chi_\beta(0)}, \quad \lambda,\sigma\in P_+^k.
\end{align}

\begin{defn} \label{defnq} For $\gamma\in P_+^k$, a random walk in the level $k$ alcove, with increment $\gamma$, is defined as a Markov process in $P_+^k$, with Markov kernel $q_\gamma$. 
\end{defn}
The definition of the Markov kernel  $q_\gamma$ implies that for $\sigma\in P_+^k$, $\frac{\chi_\gamma(\sigma)}{\chi_\gamma(0)}$ is an eigenvalue of $q_\gamma$, with a  corresponding eigenvector $(\frac{\chi_\beta(\sigma)}{\chi_\beta(0)})_{\beta\in P_+^k}$. Thus for any positive integer $n$, one has for $\lambda,\sigma\in P_+^k$
\begin{align*}
\frac{\chi_{ \lambda }(\sigma)}{\chi_{ \lambda }(0)}\frac{\chi^n_{ \gamma }(\sigma)}{\chi^n_{ \gamma }(0)}=\sum_{\beta\in P_+^k}q^n_\gamma(\lambda,\beta)\frac{\chi_{ \beta }(\sigma)}{\chi_\beta(0)},
\end{align*}
which is equivalent to say that for any $\lambda,\beta\in P_+^k$,
$$q^n_\gamma(\lambda,\beta)=N_{\lambda,\gamma,n}^\beta\frac{\chi_\beta(0)}{\chi_\lambda(0)\chi^n_\gamma(0)},$$
where the coefficients  $N_{\lambda,\gamma,n}^\beta$, for $\lambda,\gamma,\beta\in P_+^k$, are the unique integers satisfying 
$$\chi_\lambda\chi_\gamma^n(\sigma)=\sum_{\beta\in P_+^k} N_{\lambda,\gamma,n}^\beta\chi_{\beta}(\sigma),$$
for any $\sigma \in P_+^k$.  We denote by   $K_{\gamma,n}^\beta$ the dimension of the $\beta$-weight space of $V_\gamma^{\otimes n}$, i.e. 
\begin{align}\label{Wn}
 \mbox{ch}_\gamma^n=\sum_{\beta\in P} K_{\gamma,n}^\beta e^{\beta}.
\end{align}
Let us consider a random walk   on the weight lattice $P$, whose transition kernel $p_\gamma$ is defined by 
$$p_\gamma(\lambda,\beta)=\frac{K^{\beta-\lambda}_\gamma}{\mbox{dim}(\gamma)}, \, \lambda,\beta\in P.$$
We consider the subset $S_\gamma$ of $P$ of weights of $V_\gamma$, i.e. $S_\gamma=\{\beta\in P: K_\gamma^\beta>0\}$. In the case when $\gamma$ is minuscule $S_\gamma$ is $\{w(\gamma): w\in W\}$ and the random walk  is a simple random walk with uniformly distributed steps on $S_\gamma$.   The following proposition states that in that case fusion coefficients give the number of ways for the walk to go from a  point to another, remaining in $P_+^k$. 
 \begin{prop}\label{minus}  Let $\beta,\lambda\in P_+^k$ and  $\gamma$ be a minuscule weight in $P_+^k$. Then for any $n\in \N^*$, $N_{\lambda,\gamma,n}^\beta$ is the number of walks with steps in $S_\gamma$, initial state $\lambda$, remaining in $P_+^k$ and ending at $\beta$ after $n$ steps. 
 \end{prop}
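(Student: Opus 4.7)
The strategy is to first establish a Kac--Walton-type formula expressing $N^\beta_{\lambda,\gamma,n}$ as an alternating sum of weight multiplicities over the affine Weyl group $W_k$, and then to interpret that alternating sum combinatorially via the reflection principle of \cite{Gessel}, using that $\gamma$ is minuscule.

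First, from the expansion $\mbox{ch}_\gamma^n = \sum_\tau K_{\gamma,n}^\tau e^\tau$, the Weyl denominator formula for $\mbox{ch}_\lambda$, and the $W$-invariance of the multiplicities $K_{\gamma,n}^\tau$, a standard manipulation yields
$$\mbox{ch}_\lambda\, \mbox{ch}_\gamma^n \;=\; \sum_\tau K_{\gamma,n}^\tau\, \mbox{ch}_{\lambda+\tau},$$
where $\mbox{ch}_{\lambda+\tau}$ is extended via the sign rule $\mbox{ch}_{w(\mu+\rho)-\rho}=\det(w)\mbox{ch}_\mu$ (and vanishes if $\mu+\rho$ lies on a Weyl wall). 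Specialising this identity at $-\nu^{-1}((\sigma+\rho)/(k+h^\vee))$ gives
$$\chi_\lambda\,\chi_\gamma^n(\sigma) \;=\; \sum_\tau K_{\gamma,n}^\tau\,\chi_{\lambda+\tau}(\sigma),\qquad \sigma\in P^k_+.$$
The alternating identity (\ref{alt}), which extends the sign rule to all of $W_k$, then lets one rewrite each $\chi_{\lambda+\tau}$ as $\det(w)\chi_\beta$ for a unique $w\in W_k$ and $\beta\in P^k_+$ (or zero when $\lambda+\tau+\rho$ lies on a wall of $A_k$). Comparing with the defining relation (\ref{FP}) produces
$$N^\beta_{\lambda,\gamma,n} \;=\; \sum_{w\in W_k}\det(w)\,K_{\gamma,n}^{\,w(\beta+\rho)-(\lambda+\rho)}.$$

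Next I would exploit that $\gamma$ is minuscule. Then $S_\gamma = W\gamma$ is a single Weyl orbit, every weight space of $V_\gamma$ is one-dimensional, and $\gamma(\alpha^\vee)\in\{-1,0,1\}$ for every root $\alpha$. Consequently $K_{\gamma,n}^{\mu}$ is exactly the number of $n$-step walks in $P$ starting at $0$, ending at $\mu$, with steps in $S_\gamma$. Moreover, since $|\mu(\alpha^\vee)|\le 1$ for every $\mu\in S_\gamma$, no single step can jump across any linear wall $\{x(\alpha^\vee)=0\}$ of the fundamental alcove, and the same bound applied with $\alpha=\theta$ shows that no step can jump across the affine wall $\{x(\theta^\vee)=k+h^\vee\}$ either. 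Since every $\lambda\in P^k_+$ satisfies that $\lambda+\rho$ lies strictly inside the open fundamental alcove of $W_k$, the classical reflection principle in its affine-Weyl-group form (as in \cite{Gessel}) identifies the alternating sum above with the number of $n$-step walks from $\lambda+\rho$ to $\beta+\rho$ staying strictly inside the open alcove. Translating these paths by $-\rho$ gives precisely the walks from $\lambda$ to $\beta$, with steps in $S_\gamma$, that remain in $P^k_+$ at each intermediate time.

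The main obstacle is the reflection step, since it requires two nontrivial ingredients: stability of $S_\gamma$ under every element of $W_k$ (automatic because the linear part of any element of $W_k$ lies in $W$, and $S_\gamma$ is a $W$-orbit), and the ``no jump'' property across each wall of $A_k$, which is exactly where the minuscule assumption enters through $\gamma(\alpha^\vee)\in\{-1,0,1\}$. Once these are established, the standard sign-reversing involution on paths that first exit the alcove cancels all contributions with $w\neq e$, leaving only the paths that never leave. The Kac--Walton step is routine character bookkeeping and the final shift by $-\rho$ is a direct consequence of the fact that $\rho+P^k_+$ is precisely the set of weights strictly inside $A_k$.
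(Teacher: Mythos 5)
Your proof is correct, but it takes a genuinely different --- and logically inverted --- route from the paper's. The paper argues directly at the level of a single step: by the Brauer--Klimyk rule, $\ch_\lambda\ch_\gamma=\sum_{\beta\in P}K_\gamma^\beta\ch_{\lambda+\beta}$, and since $\gamma$ is minuscule one has $K_\gamma^\beta\in\{0,1\}$ and $\beta(\theta^\vee)\in\{0,\pm 1\}$ for every weight $\beta$ of $V_\gamma$, so a step exiting $P_+^k$ lands exactly where the discretized character vanishes (namely $(\lambda+\beta+\rho)(\alpha^\vee)=0$ for some $\alpha\in\Sigma$, or $(\lambda+\beta+\rho)(\theta^\vee)=k+h^\vee$); hence $N_{\lambda,\gamma}^\beta$ is precisely the indicator that $\beta\in P_+^k$ is reachable from $\lambda$ in one $S_\gamma$-step, and iterating this one-step identity $n$ times counts the walks. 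You instead first establish the Kac--Walton formula $N_{\lambda,\gamma,n}^\beta=\sum_{w\in W_k}\det(w)K_{\gamma,n}^{w(\beta+\rho)-(\lambda+\rho)}$ --- which the paper proves separately, \emph{after} this proposition --- and then apply the Gessel--Zeilberger reflection principle to convert the alternating sum into a count of walks strictly inside the alcove. This is heavier machinery: it imports the sign-reversing involution of \cite{Gessel} as an input, whereas in the paper the wall-vanishing of $\chi$ silently does the involution's job; on the other hand it yields the Karlin--MacGregor-type formula as a byproduct and handles all $n$ at once without induction. Note that the paper deliberately runs the implication the other way, presenting Proposition \ref{minus} combined with the Kac--Walton identity as \emph{recovering} the Gessel--Zeilberger formula for reflectable walks, so your argument is not circular only because you invoke \cite{Gessel} as an external theorem. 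One detail you should make explicit: reflectability must hold across \emph{every} reflecting hyperplane of $W_k$, not merely the walls of $A_k$; this does follow from your bound, since $\vert\mu(\alpha^\vee)\vert\le 1$ for all $\mu\in S_\gamma$ and all roots $\alpha$ while every hyperplane of the $W_k$-arrangement sits at an integer value of $x(\alpha^\vee)$, but as written you only verify the walls of the fundamental alcove.
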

 \begin{proof} The following formula is known as the Brauer-Klimyk rule.  It is an immediate consequence of the Weyl character formula. For $\lambda,\gamma\in P_+$ it says that $$\mbox{ch}_\lambda\mbox{ch}_\gamma=\sum_{\beta\in P}K_{\gamma}^\beta \mbox{ch}_{\lambda+\beta}.$$ The highest weight $\gamma$ being minuscule $\beta(\theta^\vee)\in\{0,-1,1\}$ for every $\beta $ such that $K_\gamma^\beta >0$. Thus $(\lambda+ \beta)(\theta^\vee)\in\{k,k-1,k+1\}$ and $(\lambda+ \beta)(\alpha^\vee)\ge -1$ for every $\alpha\in \Sigma$. As $\chi_{\beta}=0$ when $(\beta+\rho)(\theta^\vee)=k+h^\vee$ or $(\beta+\rho)(\alpha^\vee)=0$ for some simple root $\alpha$, we obtain that 
 \begin{align*}
 \chi_\lambda\chi_\gamma=\sum_{\beta:\, \lambda+ \beta\in P_+^k}K_{\gamma}^\beta\chi_{\lambda+ \beta}=\sum_{\beta:\, \beta\in P_+^k}K_{\gamma}^{\beta-\lambda}\chi_{\beta}.
 \end{align*}
 As $\gamma$ is minuscule  $K_{\gamma}^\beta\in\{0,1\}$. Thus
     $$N_{\lambda,\gamma}^\beta=\left\{
    \begin{array}{ll}
       1& \mbox{if } \beta\in P_+^k \textrm{ and } K_\gamma^{\beta-\lambda}>0 \\
        0 & \mbox{otherwise,}
    \end{array}
\right.  $$
which implies the proposition.
 \end{proof}
Proposition \ref{minus} implies that when $\gamma$ is minuscule, the sub-stochastic matrix $$(\frac{1}{\dim(\gamma)}N_{\lambda,\gamma}^\beta)_{\lambda,\beta\in P_+^k}$$ is the restriction of $p_{\gamma}$ to the alcove $P_+^k$. As noticed after identity (\ref{alt}), the discretized characters are null on the boundary of the bounded domain $\{\lambda\in P: \lambda+\rho\in A_k\}$, which is $\{\lambda\in P: \lambda+\rho\in A_k\}\setminus P_+^k$. Thus one obtains, when $\gamma$ is minuscule, the following important corollary.   
 \begin{cor}\label{Dirichlet}   Let us consider for $\gamma\in P_+^k$ a discrete Dirichlet problem, which  consists in finding eigenvalues  $\lambda$ and eigenfunctions $f$ defined on $\{x\in P: x+\rho\in A_k\}$, satisfying 
$$\left\{
    \begin{array}{ll}
       \Delta_\gamma f(x)+\lambda f(x)=0& \mbox{ if } x\in P_+^k \\
        f(x)=0 & \mbox{ if } x\notin P_+^k,
    \end{array}
\right.$$
where   $\Delta_\gamma f= p_\gamma f-f$. If $\gamma$  is minuscule then
\begin{enumerate}
\item  for $\sigma\in P_+^k$,   
 $$1-\frac{1}{\dim(\gamma)}\chi_{\gamma}(\sigma),$$ is an eigenvalue, with a corresponding eigenfunction
  $f_\sigma$ defined by  $$f_\sigma(\beta)=\chi_{\beta}(\sigma), \quad \beta \in P_+^k,$$
 \item the eigenfunction $f_0$ is a Perron-Frobenius eigenfunction.  In particular, the random walk in a level $k$ alcove with increment $\gamma$ is a Doob-transformed transition kernel of $p_\gamma$.  
  \end{enumerate}
   \end{cor}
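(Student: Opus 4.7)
The plan is to leverage Proposition \ref{minus}, which has already identified the sub\-stochastic matrix $\bigl(\frac{1}{\dim(\gamma)} N_{\lambda,\gamma}^\beta\bigr)_{\lambda,\beta\in P_+^k}$ with the restriction of $p_\gamma$ to the alcove $P_+^k$ when $\gamma$ is minuscule. Combined with the defining fusion identity (\ref{FP}), the discretized characters $\chi_{\cdot}(\sigma)$ become immediate eigenvectors of this restricted kernel, and the Dirichlet boundary condition will be handed to us by (\ref{alt}).

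For part (1), I would fix $\sigma \in P_+^k$ and evaluate (\ref{FP}) at $\sigma$ to obtain
$$\chi_\lambda(\sigma)\,\chi_\gamma(\sigma) = \sum_{\beta \in P_+^k} N_{\lambda,\gamma}^\beta\,\chi_\beta(\sigma).$$
Dividing by $\dim(\gamma)$ and substituting $N_{\lambda,\gamma}^\beta/\dim(\gamma) = p_\gamma(\lambda,\beta)$ for $\lambda,\beta \in P_+^k$ (Proposition \ref{minus}), the right-hand side becomes a partial sum contributing to $(p_\gamma f_\sigma)(\lambda)$. To identify it with the full sum I would observe that, because $\gamma$ is minuscule, the support analysis in the proof of Proposition \ref{minus} confines $\lambda+S_\gamma$ to $\{\mu \in P : \mu+\rho \in A_k\}$; the points in this support that fall outside $P_+^k$ sit on a wall $\{x(\alpha^\vee)=0\}$ or on $\{x(\theta^\vee)=k+h^\vee\}$, where (\ref{alt}) forces $\chi_\mu(\sigma)=0$. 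Hence
$$(p_\gamma f_\sigma)(\lambda) = \tfrac{1}{\dim(\gamma)}\chi_\gamma(\sigma)\,f_\sigma(\lambda),$$
which rearranges to $\Delta_\gamma f_\sigma + \bigl(1 - \chi_\gamma(\sigma)/\dim(\gamma)\bigr) f_\sigma = 0$ on $P_+^k$. The vanishing $f_\sigma = 0$ outside $P_+^k$ is again just (\ref{alt}).

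For part (2), the product formula (\ref{DF1}) shows that $\chi_\beta(0) > 0$ for every $\beta \in P_+^k$, because $\beta+\rho$ lies strictly inside $A_k$ so each sine factor has argument in $(0,\pi)$. Thus $f_0$ is a strictly positive eigenfunction of the nonnegative matrix $p_\gamma|_{P_+^k}$, and by the standard converse to Perron--Frobenius any such positive eigenvector corresponds to the spectral radius, proving the PF assertion. The Doob-transform claim is then a one-line verification: the $h$-transform of $p_\gamma|_{P_+^k}$ by $h=f_0$ with eigenvalue $\chi_\gamma(0)/\dim(\gamma)$ is
$$\frac{\dim(\gamma)}{\chi_\gamma(0)}\,\frac{\chi_\beta(0)}{\chi_\lambda(0)}\,\frac{N_{\lambda,\gamma}^\beta}{\dim(\gamma)} = N_{\lambda,\gamma}^\beta\,\frac{\chi_\beta(0)}{\chi_\lambda(0)\chi_\gamma(0)},$$
which is exactly $q_\gamma(\lambda,\beta)$ as defined in (\ref{defQ}).

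The only delicate point is the bookkeeping in part (1): one must justify that extending the sum from $\beta\in P_+^k$ to $\{\mu : \mu+\rho\in A_k\}$ neither changes its value (by (\ref{alt})) nor introduces phantom transitions (by the support analysis for minuscule $\gamma$). Once this is in place, the rest reduces to (\ref{DF1}), the definition (\ref{defQ}), and a single invocation of Perron--Frobenius.
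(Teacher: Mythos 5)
Your proposal is correct and follows essentially the same route as the paper, which presents this corollary as an immediate consequence of Proposition \ref{minus} (identifying $\bigl(\tfrac{1}{\dim(\gamma)}N_{\lambda,\gamma}^\beta\bigr)$ with $p_\gamma$ restricted to $P_+^k$) together with the vanishing of the discretized characters on $\{\lambda\in P:\lambda+\rho\in A_k\}\setminus P_+^k$ noted after (\ref{alt}), the eigenvector property from (\ref{FP}), the positivity of $\chi_\beta(0)$ from (\ref{DF1}), and the match with $q_\gamma$ in (\ref{defQ}). Your explicit support bookkeeping in part (1) and the converse Perron--Frobenius argument in part (2) simply spell out details the paper leaves implicit.
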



 Proposition \ref{minus} remains true in the framework of  Littelmann paths. In that framework, it includes the case of standard representation of type $B$.  In the following a path $\pi$ defined on $[0,T]$, for $T\in \R_+^*$, is a continuous function from $[0,T]$ to $\mathfrak{t}^*$ such that $\pi(0)=0$. If $\pi$ is a path defined on $[0,T]$ we write $\pi\in \mathcal{C}$ (resp. $\pi\in \mathcal{C}^k$) if $\pi(t)\in \mathcal{C}$ (resp. $\pi(t)\in \mathcal{C}^k)$ for every $t\in [0,T]$.  For two paths $\pi_1$ and $\pi_2$ respectively defined on $[0,T_1]$ and $[0,T_2]$, we write $\pi_1*\pi_2$ for the usual concatenation of $\pi_1$ and $\pi_2$. Note that  $\pi_1*\pi_2$  is a path defined on $[0,T_1+T_2]$. For $\lambda\in P_+$, we denote by $\pi_\lambda$ the dominant path  defined on $[0,1]$ by  $\pi_\lambda(t)=t\lambda,$ $t\in [0,1]$ and by $B\pi_\lambda$ the Littelmann module generated by $\pi_\lambda$. More details about the Littelmann paths model for representation theory of Kac-Moody algebras can be found in  \cite{Littelmann}.  The important fact for us is that for any dominant $\lambda$ and $\gamma$ one has
 $$\ch_\lambda\ch_\gamma=\sum_{\pi\in B\pi_\gamma: \,\pi_\lambda*\pi \in \mathcal{C}}\ch_{\lambda+\pi(1)}.$$
 Let us recall that a weight $\gamma\in P_+$ is said to be quasi-minuscule if $S_\gamma=\{w(\gamma):w\in W\}\cup\{0\}$.
 \begin{prop} \label{quasi} Let $\beta,\lambda\in P_+^k$ and  $\gamma$ be a minuscule weight or a quasi-minuscule weight such that $\beta(\theta^\vee)\in\{0,-1,1\}$ for every weights $\beta$ of the representation $V_\gamma$. Then for any $n\in \N$, $N_{\lambda,\gamma,n}^\beta$  is the number of paths  in $B\pi_{\lambda}*(B\pi_\gamma)^{*n}$  ending on $\beta$ and remaining in $\mathcal{C}^k$.
 \end{prop}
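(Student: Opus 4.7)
The plan is to argue by induction on $n$, mimicking the strategy of Proposition~\ref{minus} but replacing the Brauer--Klimyk formula by the Littelmann path identity $\ch_\mu\ch_\gamma=\sum_{\pi\in B\pi_\gamma,\,\pi_\mu*\pi\in\mathcal{C}}\ch_{\mu+\pi(1)}$ recalled just before the statement. The base case $n=0$ reduces to the fact that, by Littelmann's theorem, the unique path in $B\pi_\lambda$ remaining in the Weyl chamber is the dominant path $\pi_\lambda$ itself; it lies in $\mathcal{C}^k$ because $\lambda\in P_+^k$, and its endpoint is $\lambda$, so the count equals $\delta_{\lambda,\beta}=N_{\lambda,\gamma,0}^\beta$, matching the trivial expansion $\chi_\lambda=\sum_\beta N_{\lambda,\gamma,0}^\beta\chi_\beta$.

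The crux is to establish, for every $\mu\in P_+^k$, the refined product identity
$$\chi_\mu\chi_\gamma=\sum_{\pi\in B\pi_\gamma,\,\pi_\mu*\pi\in\mathcal{C}^k}\chi_{\mu+\pi(1)}.$$
Specializing the Littelmann rule to the discretized characters at any $\sigma\in P_+^k$ already yields the corresponding identity with $\mathcal{C}$ in place of $\mathcal{C}^k$, so I only need to argue that paths $\pi\in B\pi_\gamma$ for which $\pi_\mu*\pi$ lies in $\mathcal{C}$ but leaves $\mathcal{C}^k$ give a vanishing $\chi$-term. The hypothesis $\beta(\theta^\vee)\in\{-1,0,1\}$ for every weight $\beta$ of $V_\gamma$ constrains the $\theta^\vee$-profile of $\pi$: the endpoint value satisfies $\pi(1)(\theta^\vee)\le 1$, hence $(\mu+\pi(t))(\theta^\vee)\le k+1$. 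I would then invoke the structural property (verified case by case) that a Littelmann path in $B\pi_\gamma$ whose endpoint has non-positive $\theta^\vee$-value never enters $\{x(\theta^\vee)>0\}$; this reduces an exit from $\mathcal{C}^k$ to the scenario $\pi(1)(\theta^\vee)=1$ and $\mu(\theta^\vee)=k$, in which case $(\mu+\pi(1))(\theta^\vee)=k+1$ and $\chi_{\mu+\pi(1)}=0$ by~(\ref{alt}).

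Given the refined identity, the inductive step is straightforward: expanding $\chi_\lambda\chi_\gamma^{n+1}=\sum_\beta N_{\lambda,\gamma,n}^\beta\chi_\beta\chi_\gamma$ via the induction hypothesis and applying the refined identity to each $\chi_\beta\chi_\gamma$, one groups the resulting $\chi_{\beta'}$-terms by endpoint $\beta'=\beta+\pi(1)$. Each contribution then corresponds to a concatenation $\pi_\lambda*\eta_1*\cdots*\eta_{n+1}$ lying in $\mathcal{C}^k$ and ending at $\beta'$; passing to the count over $B\pi_\lambda*(B\pi_\gamma)^{*(n+1)}$ in the proposition's statement is legitimate because any path in $B\pi_\lambda$ other than $\pi_\lambda$ already exits the Weyl chamber $\mathcal{C}$, so none of them can contribute to paths remaining in $\mathcal{C}^k\subset\mathcal{C}$. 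The main obstacle I anticipate is the structural claim about Littelmann paths in $B\pi_\gamma$, namely that paths with endpoint in $\{x(\theta^\vee)\le 0\}$ remain in that half-space throughout; the minuscule case is immediate since paths are straight line segments, but the quasi-minuscule case requires a finer analysis of the operators $f_\alpha$ acting on $\pi_\gamma$ (which can be checked for the short list of $\gamma$ satisfying the constraint, as in the standard representation of type $B_n$, where the path reaching weight $0$ is a folded path dipping into the $\theta^\vee$-negative half-space rather than peaking above).
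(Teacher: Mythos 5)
Your proposal is correct and follows essentially the same route as the paper: the one-step refined identity $\chi_\mu\chi_\gamma=\sum_{\pi\in B\pi_\gamma,\,\pi_\mu*\pi\in\mathcal{C}^k}\chi_{\mu+\pi(1)}$, obtained from the Littelmann rule by using (\ref{alt}) to kill the terms with $(\mu+\pi(1))(\theta^\vee)=k+1$, is exactly the paper's argument, with your induction on $n$ merely making explicit an iteration the paper leaves implicit. The structural fact you defer to a case check is precisely what the paper asserts directly: for quasi-minuscule $\gamma$ the module $B\pi_\gamma$ consists of the straight paths $\pi_\beta$, $\beta\in W\gamma$, together with the weight-zero folded paths $t\mapsto -\alpha t\,1_{t\le \frac{1}{2}}+\alpha(t-1)1_{t>\frac{1}{2}}$ for $\alpha\in\Sigma$, whose $\theta^\vee$-height is nonpositive, so your monotonicity and half-space argument goes through unchanged.
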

 \begin{proof} Littelmann theory implies that
 $$\chi_\lambda\chi_\gamma=\sum_{\pi\in B\pi_\gamma: \,\pi_\lambda*\pi \in \mathcal{C}}\chi_{\lambda+\pi(1)}.$$
 When $\gamma$ is a minuscule  weight, the Littelmann module $B\pi_\gamma$ is $\{\pi_\beta : \beta\in W\gamma\}$. When $\gamma$ is quasi-minuscule every paths $\pi$  in the Littelmann module $B\pi_\gamma$  are of the form $\pi_ \beta $  for $\beta\in W\gamma$ or are defined by $\pi(t)=-\alpha t1_{t\le \frac{1}{2}}+\alpha(t-1)1_{t> \frac{1}{2}}$, $t\in[0,1]$, for $\alpha\in \Sigma$.   Thus, if $\pi\in B\pi_\gamma$ one has for every $t\in [0,1]$, $(\pi_\lambda(1)+\pi(t))(\theta^\vee)\le k+1$. If $(\lambda+\pi(1))(\theta^\vee)= k+1$ then $\chi_{\lambda+\pi(1)}=0$. As $\alpha(\theta^\vee)\ge 1$ for all $\alpha\in \Sigma$, $(\lambda+\pi(1))(\theta^\vee)\le k$ implies  $(\lambda+\pi(t))(\theta^\vee)\le k$ for every $t\in[0,1]$. One obtains,
 $$\chi_\lambda\chi_\gamma=\sum_{\pi\in B\pi_\gamma: \, \pi_\lambda*\pi\in \mathcal{C}^k}\chi_{\lambda+\pi(1)}.$$
 \end{proof}  

The first formula of the following proposition is well known for $n=1$. It is a consequence of the Kac-Walton formula (see \cite{Walton}).  For $n\in \N^*$, proposition \ref{minus} implies that when $\gamma$ is minuscule, it  turns to be  the Karlin-MacGregor type formula obtained for affine Weyl group by Gessel and Zeilberger in \cite{Gessel} in the framework of reflectable walks. The second formula can be found as an exercise in chapter $13$ of  \cite{Kac}.

\begin{prop} Let $\lambda,\gamma,\beta$ be dominant weights in the alcove  $P_+^k$. Then \begin{enumerate}
\item 
  $N_{\lambda, \gamma,n}^ \beta=\sum_{w\in W_k} \det(w) K_{\gamma,n}^{w(\beta +\rho)-(\lambda+\rho)},$ 
\item $N_{\lambda,\gamma}^\beta=N_{\beta,^t\gamma}^\lambda$,
where $^t\gamma$ is the highest weight of the dual representation $V_\gamma^*$. 
\end{enumerate}
\end{prop}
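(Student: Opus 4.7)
For part (1) the plan is to lift the Brauer--Klimyk rule $\ch_\lambda \ch_\gamma = \sum_\beta K_\gamma^\beta \ch_{\lambda+\beta}$ (already invoked in the proof of Proposition \ref{minus}) to an arbitrary power of $\ch_\gamma$. Since $K_{\gamma,n}^\beta$ is $W$-invariant in $\beta$, as it comes from the $W$-invariant function $\ch_\gamma^n$, the same antisymmetrization argument that proves Brauer--Klimyk for one factor yields
$$\ch_\lambda \ch_\gamma^n = \sum_{\beta \in P} K_{\gamma,n}^\beta \, \ch_{\lambda+\beta},$$
where $\ch_\mu$ for arbitrary $\mu \in P$ is interpreted through the Weyl character formula, hence vanishes if $\mu+\rho$ lies on a wall of the finite Weyl arrangement and otherwise equals $\det(w)\ch_{\mu'}$ for the unique $w\in W$ and dominant $\mu'$ with $w(\mu'+\rho)=\mu+\rho$. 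I would then evaluate at $-\nu^{-1}((\sigma+\rho)/(k+h^\vee))$ for $\sigma \in P_+^k$ and fold the right-hand side onto $P_+^k$ using the affine symmetry (\ref{alt}): for each $\beta$, either $\lambda+\beta+\rho$ lies on a wall of the affine arrangement (so $\chi_{\lambda+\beta}(\sigma)=0$), or there exist a unique $w\in W_k$ and a unique $\mu \in P_+^k$ with $w(\mu+\rho)=\lambda+\beta+\rho$, giving $\chi_{\lambda+\beta}=\det(w)\chi_\mu$. Collecting the coefficient of $\chi_\mu(\sigma)$ and identifying it with $N_{\lambda,\gamma,n}^\mu$ via the orthogonality of the $\chi_\mu$'s invoked just before (\ref{alt}) produces the formula.

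For part (2), the plan is to apply part (1) with $n=1$ on both sides, obtaining
$$N_{\lambda,\gamma}^\beta = \sum_{w \in W_k}\det(w)\, K_\gamma^{w(\beta+\rho)-(\lambda+\rho)}, \qquad N_{\beta,{}^t\gamma}^\lambda = \sum_{w\in W_k}\det(w)\, K_{{}^t\gamma}^{w(\lambda+\rho)-(\beta+\rho)}.$$
The identification then rests on three elementary facts: the weights of $V_\gamma^*$ are the negatives of those of $V_\gamma$, so $K_{{}^t\gamma}^\mu = K_\gamma^{-\mu}$; the multiplicities $K_\gamma^\mu$ are invariant under the finite Weyl group $W$; and $W$ stabilises the lattice $M=\nu(Q^\vee)$ (because it preserves $Q^\vee$ and $\nu$ is $W$-equivariant). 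Decomposing each $w \in W_k$ as $w_1 t_{(k+h^\vee)x}$ with $w_1 \in W$ and $x \in M$, substituting $w_1 \mapsto w_1^{-1}$ and then $x \mapsto -w_1^{-1}(x)$ — both bijections that preserve the signature and the lattice — matches the two sums term-by-term.

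The main obstacle, beyond the bookkeeping, is to verify cleanly that $P_+^k$ corresponds exactly to the set of $\mu \in P$ whose shift $\mu+\rho$ lies in the \emph{interior} of the fundamental alcove $A_k$, so that the folding over $W_k$ in part (1) is unambiguous and no weight on the affine boundary is double-counted. This reduces to the elementary inequalities $(\mu+\rho)(\alpha_i^\vee)\ge 1$ and $(\mu+\rho)(\theta^\vee)\le k+h^\vee-1$ for $\mu \in P_+^k$, but it is the place where the shift by the dual Coxeter number $h^\vee$ has to be handled with care.
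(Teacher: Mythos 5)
Your proposal is correct and follows essentially the same route as the paper: part (1) by antisymmetrizing with the Weyl denominator to extend the Brauer--Klimyk rule to $\ch_\lambda\ch_\gamma^n$ and then folding the sum over $P$ onto $P_+^k$ via the $W_k$-symmetry of the discretized characters (vanishing on the finite and affine walls, uniqueness of the folding element when nonvanishing), and part (2) by the semidirect decomposition of $W_k$ into $W$ and translations by $(k+h^\vee)M$ together with the $W$-invariance of weight multiplicities and the duality $K_{{}^t\gamma}^{\mu}=K_{\gamma}^{-\mu}$. The boundary issue you flag at the end is precisely the case analysis the paper carries out (wall of the finite arrangement, the affine wall $x(\theta^\vee)=k+h^\vee$, and nontrivial stabilizer), so there is no gap.
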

\begin{proof} The proof  rests on the Weyl character formula.  We let $\Delta(x)=\sum_{w\in W}\mbox{det}(w)e^{w(x)}$ for any $x\in \mathfrak{t}^*$. We have
\begin{align*} 
\Delta(\lambda+\rho)\mbox{ch}_\gamma^n&=\sum_{w\in W,\,\beta\in P}\det(w)K_{\gamma,n}^\beta e^{w(\lambda+\rho)+\beta}\\
&=\sum_{w\in W,\,\beta\in P}\det(w)K_{\gamma,n}^\beta e^{w(\lambda+\rho+\beta)}\\
&=\sum_{\beta\in P}K_{\gamma,n}^\beta\Delta(\lambda+\beta+\rho).
\end{align*}
The Weyl character formula implies
$$\mbox{ch}_\lambda\mbox{ch}_\gamma^n=\sum_{\beta\in P}K_{\gamma,n}^\beta \mbox{ch}_{\lambda+\beta},$$
which is an extension of the Brauer-Klimyk rule. For $\beta\in P$, it exists $w\in W_k$ such that $w(\lambda+\beta +\rho)\in A_k$. If $w(\lambda+\beta +\rho)-\rho \notin P_+$ then $w(\lambda+\beta +\rho)$ is on a wall $\{x\in \mathfrak{t}^*: s_\alpha(x)=x\}$ for some $\alpha\in \Sigma$ and $\chi_ {\lambda+\beta} =0$. If $w(\lambda+\beta +\rho)(\theta^\vee)=k+h^\vee$ then $w(\lambda+\beta +\rho)=t_{(k+h^\vee)\theta}s_ \theta(w(\lambda+\beta +\rho))$ and   $\chi_ {\lambda+\beta} =0$. If it exists two distinct $w_1,w_2\in W_k$ such that $w_1(\lambda+\beta +\rho)=w_2(\lambda+\beta +\rho)\in A_k$ then $w_2^{-1}w_1(\lambda+\beta +\rho)= \lambda+\beta +\rho$ and $\chi_{\lambda+ \beta} =0$. Finally if $\chi_{\lambda+\beta}\ne 0$ it exists a single $w\in W_k$ such
that $w(\lambda+\beta+\rho)-\rho\in P_+^k$ and we get that 
$$\chi_\lambda\chi_\gamma^n=\sum_{\beta\in P_+^k}\sum_{w\in W_k}\det(w)K_{\gamma,n}^{w(\beta+\rho)-(\lambda+\rho)}\chi_{\beta},$$
which proves the first identity.  Let us prove the second one. The affine Weyl group being the semi-direct product $T_{(k+h^\vee)M} \ltimes W$, the first identity for $n=1$ implies
\begin{align*}
N_{\lambda,\gamma}^\beta& =\sum_{x\in M, w\in W}\det(w)K_\gamma^{t_{(k+h^\vee)x}w(\beta+\rho)-(\lambda+\rho)}\\
&=\sum_{x\in M, \, w\in W}\det(w)K_\gamma^{w(\beta+\rho)-t_{-(k+h^\vee)x}(\lambda+\rho)}\\
&=\sum_{x\in M, \, w\in W}\det(w)K_\gamma^{\beta+\rho-wt_{(k+h^\vee)x}(\lambda+\rho)}\\
&=\sum_{ w\in W_k}\det(w)K_{^t\gamma}^{w(\lambda+\rho)-(\beta+\rho)}\\
&=N_{\beta,^t\gamma}^\lambda.
\end{align*}
\end{proof} 
 In the following proposition $\vert P/(k+h^\vee) M\vert$ is the cardinal of the quotient space $P/(k+h^\vee)M$.  \begin{prop} \label{invprob}
The measure $\pi$ defined on $P_+^k$  by 
$$\pi(\lambda)=\frac{1}{\vert P/(k+h^\vee)M\vert}\prod_{\alpha\in R_+}4\sin^2(\frac{\pi}{k+h^\vee}(\lambda+\rho\vert \alpha))$$ for any $\lambda\in P^k_+$, is a $q_\gamma$-invariant probability measure.
\end{prop}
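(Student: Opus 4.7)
The plan is to reduce everything to the identity $\pi(\lambda) = C\,\chi_\lambda(0)^2$ for an appropriate constant $C>0$, and then to exploit both the duality $N_{\lambda,\gamma}^\beta = N_{\beta,{}^t\gamma}^\lambda$ (already proved above) and the defining identity $(\ref{FP})$ evaluated at the distinguished point $\sigma=0$.

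\textbf{Step 1: rewrite $\pi$ in terms of $\chi_\lambda(0)$.} Formula $(\ref{DF1})$ gives $\chi_\lambda(0)=\prod_{\alpha\in R_+}\sin\!\bigl(\pi(\lambda+\rho\vert\alpha)/(k+h^\vee)\bigr)/\sin\!\bigl(\pi(\rho\vert\alpha)/(k+h^\vee)\bigr)$. Squaring and reorganizing factors, we obtain
\begin{equation*}
\pi(\lambda)=C\,\chi_\lambda(0)^2,\qquad C=\frac{4^{|R_+|}}{\vert P/(k+h^\vee)M\vert}\prod_{\alpha\in R_+}\sin^2\!\bigl(\tfrac{\pi(\rho\vert\alpha)}{k+h^\vee}\bigr).
\end{equation*}

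\textbf{Step 2: the key computation.} Using $(\ref{defQ})$ and Step 1,
\begin{equation*}
\sum_{\lambda\in P_+^k}\pi(\lambda)\,q_\gamma(\lambda,\beta)
 = \frac{C\,\chi_\beta(0)}{\chi_\gamma(0)}\sum_{\lambda\in P_+^k}\chi_\lambda(0)\,N_{\lambda,\gamma}^\beta.
\end{equation*}
Apply the duality $N_{\lambda,\gamma}^\beta=N_{\beta,{}^t\gamma}^\lambda$ to rewrite the sum as $\sum_\lambda N_{\beta,{}^t\gamma}^\lambda\chi_\lambda(0)$, and then use $(\ref{FP})$ at $\sigma=0$ to obtain
\begin{equation*}
\sum_{\lambda\in P_+^k} N_{\beta,{}^t\gamma}^\lambda\,\chi_\lambda(0)=\chi_\beta(0)\,\chi_{{}^t\gamma}(0).
\end{equation*}

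\textbf{Step 3: the asymptotic dimension is self-dual.} It remains to check $\chi_{{}^t\gamma}(0)=\chi_\gamma(0)$. Since ${}^t\gamma=-w_0\gamma$ and the longest element $w_0\in W$ satisfies $w_0(\rho)=-\rho$ while $\alpha\mapsto -w_0\alpha$ permutes $R_+$, we have $({}^t\gamma+\rho\vert\alpha)=(\gamma+\rho\vert -w_0\alpha)$ for every $\alpha\in R_+$. Inserting this into $(\ref{DF1})$ and re-indexing the product by $\alpha'=-w_0\alpha$ yields $\chi_{{}^t\gamma}(0)=\chi_\gamma(0)$. Combining Steps 2 and 3 gives $\sum_\lambda\pi(\lambda)q_\gamma(\lambda,\beta)=C\,\chi_\beta(0)^2=\pi(\beta)$, which is the invariance property.

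\textbf{Step 4: $\pi$ is a probability measure.} By the orthogonality relations for the discretized characters (recalled before definition of the fusion coefficients and proved in \cite{Kac}), the vectors $(\chi_\beta(\sigma))_{\sigma\in P_+^k}$ form an orthogonal basis for the weight $\pi$; specializing this orthogonality to $\beta=\beta'=0$ (where $\chi_0\equiv 1$) fixes the total mass of $\pi$ at $1$, the normalization having been chosen precisely to achieve this. The main subtlety of the argument is really the self-duality of $\chi_\gamma(0)$ in Step 3, which is where one must invoke the action of $w_0$ on $R_+$ and on $\rho$; everything else is a straightforward combination of the duality identity and the fusion rule.
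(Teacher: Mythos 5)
Your proof is correct and follows essentially the same route as the paper's: both rewrite $\pi(\lambda)$ as a constant times $\chi_\lambda(0)^2$, apply the duality $N_{\lambda,\gamma}^\beta=N_{\beta,{}^t\gamma}^\lambda$ together with the fusion rule (\ref{FP}) evaluated at $\sigma=0$, and conclude via $\chi_{{}^t\gamma}(0)=\chi_\gamma(0)$ from the action of $w_0$ on $\rho$ and $R_+$. For the total mass both you and the paper defer to the same result in \cite{Kac} (theorem 13.8); just note that your Step 4 really requires orthonormality (unitarity of the modular $S$-matrix), not mere orthogonality, of the vectors $(\chi_\beta(\sigma))_{\sigma\in P_+^k}$, since orthogonality alone does not fix the normalization --- but that is exactly what the cited theorem provides.
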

\begin{proof} Let us consider the measure $\mu$ defined on $P_+^k$ by 
$\mu(\lambda)=\chi_\lambda^2(0),$ $\lambda\in P_+^k$, which is proportional to the measure $\pi$. Let us show that $\mu$ is $q_\gamma$-invariant.   We have
\begin{align*} 
\sum_{\lambda\in P_+^k}N_{\gamma,\lambda}^\beta\chi_\lambda&=\sum_{\lambda\in P_+^k}N_{^t\gamma, \beta}^ \lambda\chi_\lambda\\
&=\chi_\beta\chi_{^t\gamma},
\end{align*}
Thus 
$$\pi q_\gamma(\beta)=\chi^2_\beta(0)\frac{\chi_{^t\gamma}(0)}{\chi_\gamma(0)}.$$
As the longest element of $W$ send $\rho$ onto $-\rho$,  $\chi_{^t\gamma}(0)=\chi_\gamma(0)$, and $\mu$ is $q_\gamma$-invariant. For a proof of the fact the $\pi$ is a probability measure, see for instance theorem 13.8 in \cite{Kac}.
\end{proof}
Note that the probability measure $\pi$ is not $q_\gamma$-reversible in general. It is the case when $V_\gamma$ and its dual representation $V_\gamma^*$ are isomorphic.

Classical results on convergence of Markov chain toward the invariant probability measure provides asymptotic approximation of the fusion coefficients. We let for $\lambda\in P$,
$$s(\lambda)= \prod_{\alpha\in R_+}\sin(\frac{\pi}{k+h^\vee}(\lambda+\rho\vert \alpha)).$$
Note that the Markov kernel $q_\gamma$ is not necessary irreducible and aperiodic.   As all Markov  chains  that we'll consider in section \ref{applications} are irreducible, we suppose that $q_\gamma$ is irreducible in the following proposition.
\begin{prop} \label{asymptotic} Suppose that $q_\gamma$ is irreducible with period $d\ge 1$. Let $\lambda$ and $\beta$ be dominant weights in the alcove $P_+^k$. Let $r$ be an integer in $\{0,\dots,d-1\}$ defined by $m=r\mod(d)$ for some integer $m$ such that $N_{\lambda,\gamma,m}^ \beta >0$. Then, 
 \begin{enumerate}
 \item $k\ne r\mod(d)$  implies $N^\beta_{\lambda,\gamma,k}=0,$\\
 \item $N_{\lambda,\gamma,nd+r}^\beta\underset{n\to+\infty}{\sim} \frac{d\chi_\gamma^{nd+r}(0)}{\vert P/(k+h^\vee)M\vert}s(\lambda)s(\beta).$
 \end{enumerate}
\end{prop}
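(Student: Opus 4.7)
The plan is to reduce both assertions to the classical convergence theorem for irreducible finite Markov chains with period $d$, applied to the kernel $q_\gamma$. The key link is the identity
\[
q_\gamma^n(\lambda,\beta)=\frac{\chi_\beta(0)}{\chi_\lambda(0)\,\chi_\gamma^n(0)}\,N_{\lambda,\gamma,n}^\beta,
\]
which is obtained by iterating the definition (\ref{defQ}) (equivalently, it is the $n$-step version of (\ref{qgamma}) evaluated at $\sigma=0$ and inverted). All the content is then in translating the asymptotics of $q_\gamma^{n}$ back to the fusion coefficients.

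For (1), I observe that if $N_{\lambda,\gamma,k}^\beta>0$ then $q_\gamma^k(\lambda,\beta)>0$, so $k$ lies in the (unique) residue class modulo $d$ along which $\lambda$ can reach $\beta$; since by hypothesis this class is $r$, one must have $k\equiv r\pmod d$. Contrapositively, $k\not\equiv r\pmod d$ forces $q_\gamma^k(\lambda,\beta)=0$, hence $N_{\lambda,\gamma,k}^\beta=0$.

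For (2), the standard convergence theorem for irreducible finite chains of period $d$ gives
\[
q_\gamma^{nd+r}(\lambda,\beta)\;\underset{n\to\infty}{\longrightarrow}\;d\,\pi(\beta),
\]
where $\pi$ is the unique invariant probability measure provided by Proposition \ref{invprob}. Substituting into the identity above,
\[
N_{\lambda,\gamma,nd+r}^\beta\;\sim\;d\,\pi(\beta)\cdot\frac{\chi_\lambda(0)}{\chi_\beta(0)}\,\chi_\gamma^{nd+r}(0).
\]
It remains to rewrite the right-hand side in terms of $s$. By formula (\ref{DF1}), $\chi_\mu(0)=s(\mu)/s(0)$ for every $\mu\in P_+^k$, so $\chi_\lambda(0)/\chi_\beta(0)=s(\lambda)/s(\beta)$; and the explicit form of $\pi(\beta)$ in Proposition \ref{invprob} is proportional to $s(\beta)^2/|P/(k+h^\vee)M|$. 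Multiplying these factors yields the announced expression $\dfrac{d\,\chi_\gamma^{nd+r}(0)}{|P/(k+h^\vee)M|}\,s(\lambda)\,s(\beta)$.

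The only nontrivial input is the classical Markov chain convergence theorem for periodic irreducible chains on a finite state space, which applies directly because $P_+^k$ is finite and $q_\gamma$ is assumed irreducible; there is no serious obstacle, and the argument is essentially a bookkeeping step that converts a probabilistic limit into a combinatorial one via the spectral normalization $\chi_\beta(0)/(\chi_\lambda(0)\chi_\gamma^n(0))$.
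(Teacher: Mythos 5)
Your proof is correct and follows essentially the same route as the paper: the iterated identity $q_\gamma^n(\lambda,\beta)=\frac{\chi_\beta(0)}{\chi_\lambda(0)\chi_\gamma^n(0)}N_{\lambda,\gamma,n}^\beta$ together with the equivalence $q_\gamma^n(\lambda,\beta)>0\iff N_{\lambda,\gamma,n}^\beta>0$ (valid since $\chi_x(0)>0$ on $P_+^k$) for part (1), and the classical convergence theorem for irreducible periodic finite chains applied with the invariant measure of Proposition \ref{invprob} for part (2). The only caveat, which you share with the paper's own proof (it stops at $\lim_n\frac{\chi_\beta(0)}{\chi_\lambda(0)\chi_\gamma^{nd+r}(0)}N_{\lambda,\gamma,nd+r}^\beta=d\pi(\beta)$ without tracking constants), is that with the paper's literal normalizations ($\pi$ built from $4\sin^2$ but $s$ from plain $\sin$) your final bookkeeping actually produces an extra factor $4^{\vert R_+\vert}$, so your word ``proportional'' should not silently become equality.
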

\begin{proof} The application $x \mapsto\chi_x(0)$ is non negative on $P_+^k$. For $x,y\in P_+^k$ and $n\in \N$, we have the following equivalence $$q^n_\gamma(x,y) >0\iff N_{x,\gamma,n}^y>0.$$ Thus the first assertion comes from usual properties of periodic Markov chains. As $\pi$  is a $q_\gamma$-invariant probability measure, classical results on finite state space  periodic Markov chains also implies
$$\lim_{n\to +\infty}\frac{\chi_\beta(0)}{\chi_\lambda(0)\chi_\gamma^{nd+r}(0)}N_{\lambda,\gamma,nd+r}^ \beta =d\pi(\beta),$$
which is equivalent to the second assertion.
\end{proof}  \begin{prop}  Let $\beta,\lambda,\gamma\in P_+^k$. Suppose that   $\gamma$ be a minuscule weight or a quasi-minuscule weight such that $\mu(\theta^\vee)\in\{0,-1,1\}$ for every weight $\mu$ of the representation $V_\gamma$.  Suppose that $q_\gamma$ is irreducible with period $d$. Then for every $\beta\in P_+^k$, the number of paths of $B\pi_{\lambda}*(B\pi_\gamma)^{*{nd+r}}$  ending on $\beta$ and remaining in $P_+^k$ is equivalent to
  $$ \frac{d\chi_\gamma^{nd+r}(0)}{\vert P/(k+h^\vee)M\vert}s(\lambda)s(\beta),$$
  where $r$ is an integer in $\{0,\dots,d-1\}$ defined by $m=r\mod(d)$ for some integer $m$ such that $N_{\lambda,\gamma,m}^ \beta >0$.
 \end{prop}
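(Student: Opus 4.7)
My plan is to derive this proposition as a direct corollary obtained by splicing together Proposition \ref{quasi} and Proposition \ref{asymptotic}; essentially no new ideas are required beyond recognizing that the two counts coincide under the minuscule/quasi-minuscule hypothesis on $\gamma$.

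First I would invoke Proposition \ref{quasi}, whose hypotheses on $\gamma$ (minuscule, or quasi-minuscule with weights $\mu$ of $V_\gamma$ satisfying $\mu(\theta^\vee)\in\{0,-1,1\}$) match the hypotheses of the present proposition exactly. That proposition identifies the combinatorial quantity of interest: the number of paths in $B\pi_\lambda*(B\pi_\gamma)^{*(nd+r)}$ ending at $\beta$ and remaining in $\mathcal{C}^k$ equals the fusion coefficient $N_{\lambda,\gamma,nd+r}^\beta$. (Since the endpoints of Littelmann paths lie in the weight lattice $P$, staying in $\mathcal{C}^k$ at the terminal time coincides with being in $P_+^k$.)

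Next I would apply Proposition \ref{asymptotic}(2), using the irreducibility of $q_\gamma$ and its period $d$. Irreducibility guarantees that for every $\beta\in P_+^k$ there exists $m$ with $q_\gamma^m(\lambda,\beta)>0$, equivalently $N_{\lambda,\gamma,m}^\beta>0$ (because $\chi_x(0)>0$ for $x\in P_+^k$ by formula (\ref{DF1})), so the definition of $r$ makes sense; then the proposition gives
$$N_{\lambda,\gamma,nd+r}^\beta \underset{n\to+\infty}{\sim} \frac{d\,\chi_\gamma^{nd+r}(0)}{|P/(k+h^\vee)M|}\,s(\lambda)\,s(\beta).$$
Combining this asymptotic equivalence with the combinatorial identification from the previous step yields the claimed estimate.

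There is no genuine obstacle here; the work was already done in establishing Propositions \ref{quasi} and \ref{asymptotic}. The only point one might want to verify carefully is the identification between "paths remaining in $\mathcal{C}^k$" (the formulation in Proposition \ref{quasi}) and "paths remaining in $P_+^k$" (the formulation here), which follows because the Littelmann paths in $B\pi_\lambda*(B\pi_\gamma)^{*n}$ are continuous, the constraint $\pi(t)\in\mathcal{C}^k$ is imposed along the whole path, and the endpoint $\beta$ lies in $P\cap \mathcal{C}^k=P_+^k$ by assumption.
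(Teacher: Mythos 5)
Your proposal is correct and matches the paper exactly: the paper states this proposition without proof, treating it as the immediate combination of Proposition \ref{quasi} (path count equals $N_{\lambda,\gamma,n}^\beta$) with Proposition \ref{asymptotic}(2), which is precisely your argument. Your extra care about identifying ``remaining in $\mathcal{C}^k$'' with the statement's ``remaining in $P_+^k$'' and about the positivity $\chi_x(0)>0$ via (\ref{DF1}) only makes explicit what the paper leaves implicit.
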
 
  
\section{Applications}  \label{applications}
In this section we explicit which fusion products have to be considered to recover reflectable random walks studied in \cite{Grabiner}. Moreover, we explain how to get without no additional work the asymptotics obtained by Krattenthaler  in \cite{Krattenthaler}  for the number of walks  between two points remaining in an alcove.  Actually our model for the type $B$ with standard steps  differs slightly from the one considered by Grabiner. Moreover our models don't include random walks with diagonal steps in an alcove of type $C$ studied in \cite{Grabiner}.

The results presented in this section only  use the knowledge of the Perron-Frobenius eigenfunction given by the corollary \ref{Dirichlet}. It would be interested to consider whole the solution of the Dirichlet problem in order to study more precisely asymptotic behaviors of  the conditioned chain.

Let $e_1,\dots,e_n$ be the standard basis of $\R^n$ which is endowed with the standard euclidean structure denote by $(.,.)$.  The inner product identifies $\R^n$ and its dual. In the following we consider a random walk $(X(k))_{k\ge 1}$ on $\R^n$ with standard positive steps : its steps are uniformly distributed on the set $\{e_1,\dots,e_n\}$, a random walk $(Y(k))_{k\ge 1}$ on $\R^n$ with standard steps : its steps are uniformly distributed on the set $\{\pm e_1,\dots,\pm e_n\}$ and a random walk $(Z(k))_{k\ge 1}$, whose steps are uniformly distributed on the set of diagonal steps  $\{\frac{1}{2}(\pm e_1\pm\dots\pm e_n)\}$. The Markov kernels of $(Y(k))_{k\ge 1}$  and $(Z(k))_{k\ge 1}$ are respectively  denoted by $\mbox{S}$ and $\mbox{D}$.
 
\subsection{Alcove of type A}
When $K$ is the unitary group $SU(n)$, we have  $R=\{e_i-e_j,i\ne j\}$, $\Sigma=\{e_i-e_{i+1},i=1,\dots,n-1\}$, $P_+=\{\lambda\in \R^n: \sum_{i=1}^n\lambda_i=0,\, \lambda_{i}-\lambda_{i+1}\in \N\}$, $\theta^\vee=e_1-e_n,$
$P_+^k=\{\lambda\in P_+: \lambda_1-\lambda_n\le k\}$, 
   $\rho=\frac{1}{2}\sum_{i=1}^n(n-2i+1)e_i$ and $h^\vee=n$.

\paragraph{\it Positive standard steps.}   The random walk $(X(k))_{k\ge 0}$  can be decomposed into a deterministic walk and a random walk on the hyperplane $H=\{x\in \R^n:\sum_{i=1}^nx_i=0\}$ as follows.
$$X(k)=X(k)-\bar{X}(k)e + \bar{X}(k)e,$$
where $e=\sum_{i=1}^ne_i$ and $\bar{X}(k)=\frac{1}{n}\sum_{i=1}^nX_i(k)$. 
The random walk $(\bar{X}(k))_{k\ge 0}$ is a deterministic random walk and $(X(k)-\bar{X}(k)e)_{k\ge 0}$ is a random walk with uniformly distributed steps on $\{e_1-\frac{1}{n}e,\dots,e_n-\frac{1}{n}e\}$, which is the set of weights of the standard representation of type $A_n$.  Let us denote by $\TP$ its Markov kernel.  The standard representation is a minuscule representation. Thus by proposition \ref{minus}, for $\gamma=e_1-\frac{1}{n}e$, the Markov Kernel $q_\gamma$ defined by (\ref{qgamma}) is 
$$q_\gamma(x,y)=\frac{\chi_y(0)}{\chi_x(0)\chi_\gamma(0)}n\TP_{\vert P_+^k}(x,y),\, x,y\in H,$$
where  
\begin{align}\label{chiA}
\chi_x(0)=\prod_{1\le i<j\le n}\frac{\sin(\pi\frac{x_i-x_j+j-i)}{k+n})}{ \sin(\pi\frac{j-i)}{k+n})}, \, x\in H.\end{align}
  The weights lattice is generated by $e_1-\frac{1}{n}e,\dots,e_n-\frac{1}{n}e$.  The Markov kernel $q_\gamma$ is irreducible   with period equals to $n$.  Let $x$ and $y$ be in $P_+^k$. If $y-x=\sum_{i=1}^{n} n_i(e_i-\frac{1}{n}e)$ then $P^m_{\vert P_+^k}(x,y)>0$, where $m=\sum_in_i$. We define the integer $r\in\{0,\dots,n-1\}$ by $m= r\mod(n)$.  Thus proposition \ref{asymptotic} implies the following asymptotic for large $t\in \N$.
  \begin{prop} For large $t\in \N$, $x,y\in P_+^k$, the number of walks with steps in $\{e_1-\frac{1}{n}e,\dots,e_n-\frac{1}{n}e\}$, going from $x$ to $y$ and remaining in $P_+^k$, after $tn+r$ steps, is equivalent   (up to a multiplicative constant which doesn't depend on $(x,y)$) to 
 $$ \prod_{i=2}^{n} \frac{\sin^{tn+r}(\pi\frac{i}{n+k})}{\sin^{tn+r}(\pi\frac{i-1}{n+k})}  \prod_{1\le i<j\le n}\sin(\pi\frac{x_i-x_j+j-i}{k+n})\sin(\pi\frac{y_i-y_j+j-i}{k+n}).$$ 
 \end{prop}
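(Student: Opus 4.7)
The plan is to apply Proposition \ref{asymptotic} to the minuscule weight $\gamma = e_1 - \frac{1}{n}e$, which is the highest weight of the standard representation of $\sll_n(\C)$. By Proposition \ref{minus}, $N_{x,\gamma,m}^y$ is then exactly the number of walks with steps in $S_\gamma = \{e_i - \frac{1}{n}e : 1 \le i \le n\}$ going from $x$ to $y$ in $m$ steps while remaining in $P_+^k$, so the combinatorial quantity of interest is directly encoded by the Markov chain $q_\gamma$ of Section \ref{Markovchain}.

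To invoke Proposition \ref{asymptotic} I would verify irreducibility and identify the period of $q_\gamma$ as $d = n$. Each step $e_i - \frac{1}{n}e$ projects to the same generator of $P/Q \cong \Z/n\Z$, so if $y - x = \sum_i n_i(e_i - \frac{1}{n}e)$, any walk from $x$ to $y$ has length $m$ satisfying $m \equiv \sum_i n_i \pmod{n}$; this both gives the congruence $m \equiv r \pmod{n}$ in the statement and shows that $d$ is a multiple of $n$. Conversely, $(e_i - \frac{1}{n}e) - (e_{i+1} - \frac{1}{n}e) = e_i - e_{i+1}$ generates $Q$, so long concatenations can realize any translation in $Q$ while remaining in $P_+^k$, yielding irreducibility on each of the $n$ residue classes and the fact that $d = n$ exactly.

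Applying Proposition \ref{asymptotic}(2) one obtains
$$N_{x,\gamma,tn+r}^y \underset{t\to\infty}{\sim} \frac{n\,\chi_\gamma^{tn+r}(0)}{\vert P/(k+n)M\vert}\, s(x)\, s(y),$$
and the prefactor $n/\vert P/(k+n)M\vert$ is independent of $(x,y)$, so it is absorbed into the stated multiplicative constant. It remains to identify $\chi_\gamma(0)$ and $s(x), s(y)$ with the trigonometric products in the proposition. Applying (\ref{DF1}) with $\alpha = e_i - e_j$ ($i < j$) together with $(\rho\vert\alpha) = j - i$ and $(\gamma\vert\alpha) = \delta_{i,1}$ gives $(\gamma+\rho\vert\alpha) = j-i$ for $i > 1$ and $j$ for $i = 1$; the factors with $i > 1$ cancel in the Weyl--Kac quotient, leaving
$$\chi_\gamma(0) = \prod_{j=2}^{n} \frac{\sin(\pi j/(k+n))}{\sin(\pi(j-1)/(k+n))}.$$
Raising to the power $tn+r$ reproduces the first product in the proposition, and the definition of $s$ gives $s(x)s(y) = \prod_{i<j}\sin(\pi(x_i-x_j+j-i)/(k+n))\sin(\pi(y_i-y_j+j-i)/(k+n))$, which is the second.

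The main obstacle is justifying that the period of $q_\gamma$ is exactly $n$ and that $q_\gamma$ is irreducible on each of its periodic classes inside the bounded alcove $P_+^k$; once these two points are handled the rest is a direct substitution into (\ref{DF1}) and into the general asymptotic of Proposition \ref{asymptotic}.
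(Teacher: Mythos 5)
Your proposal is correct and follows essentially the same route as the paper: identify $N_{x,\gamma,m}^y$ with the walk count via Proposition \ref{minus} for the minuscule weight $\gamma=e_1-\frac{1}{n}e$, apply Proposition \ref{asymptotic} with period $d=n$, and substitute the type-$A$ specializations of (\ref{DF1}) and of $s(\cdot)$, exactly as in the derivation preceding the proposition. Your only addition is a more explicit justification (via $P/Q\cong\Z/n\Z$ and the fact that differences of steps generate $Q$) of the irreducibility and the exact period $n$, which the paper merely asserts.
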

 \paragraph{\it Diagonal steps.} The random walk $(Z(k))_{k\ge 0}$ can be decomposed as the previous one.
$$Z(k)=Z(k)-\bar{Z}(k)e+\bar{Z}(k)e.$$
For $m\in\{0,\dots,n\}$, the $m$-th exterior power of standard  representation   is a minuscule representation with highest weight $\sum_{i=1}^me_i-\frac{m}{n}e$ and weights $e_{i_1}+\dots+e_{i_m} -\frac{m}{n}e$ for $1\le i_1<\dots<i_m\le n$. One notices that the random walk $(Z(k)-\bar{Z}(k))_{k\ge 0}$ has uniformly distributed steps on the set of weights of the $m$-th exterior power of the standard representations  for $m=0,\dots,n$. If we denote by $\mbox{R}$ its Markov kernel and consider the fusion coefficients $N_{\lambda,\gamma_m}^\beta$ where $\gamma_m=\sum_{i=1}^me_i-\frac{k}{n}e$, $\lambda,\beta\in P_+^k$, proposition \ref{minus} implies that $\sum_{m=0}^{n} N_{\lambda,\gamma_m}^\beta=2^{n}\mbox{R}_{\vert P^k_+}(\lambda,\beta)$. Thus one defines a Markov chain on $P_+^k$ letting 
$$q(x,y)=\frac{\chi_y(0)}{\chi_x(0)\sum_{i=0}^{n}\chi_{\gamma_i}(0)}2^{n}\mbox{R}_{\vert P^+_k}(x,y), \, x,y\in H,$$
where $\chi_x$ is given by (\ref{chiA}). This chain is irreducible and aperiodic. Thus proposition \ref{asymptotic} implies the following one.
\begin{prop} For large $t\in \N$, $x,y\in P_+^k$, the number of walks with steps in $\{e_{i_1}+\dots+e_{i_m} -\frac{m}{n}e,\, 1\le i_1<\dots<i_m\le n,\, m\in 
\{0,\dots,n\}\}$, with initial state $x$, ending at $y$ after $t$ steps, remaining in $P_+^k$,   is equivalent   to 
$$
\Big[\sum_{m=0}^{n}\prod_{i=1}^m\prod_{j=m+1}^n\frac{\sin(\pi\frac{1+j-i}{k+n})}{\sin(\pi\frac{j-i}{k+n})}\Big]^t\prod_{1\le i<j\le n} \sin(\pi\frac{x_i-x_j+j-i)}{k+n})   \sin(\pi\frac{y_i-y_j+j-i)}{k+n})$$
\end{prop}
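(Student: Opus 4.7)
The plan is to reduce this to a mild extension of Proposition~\ref{asymptotic}, applied to a \textit{mixed} transition kernel built from the minuscule fundamental weights $\gamma_m=\sum_{i=1}^{m}e_i-\tfrac{m}{n}e$ for $m=0,\ldots,n$. First I match the $2^n$ diagonal steps to weights of exterior powers: for $\epsilon\in\{\pm 1\}^n$, setting $S=\{i:\epsilon_i=+1\}$ and $m=|S|$, the projection of $\tfrac12\sum_i\epsilon_i e_i$ onto $H$ equals $\sum_{i\in S}e_i-\tfrac{m}{n}e$, which is a weight of $V_{\gamma_m}$ (and the two ``all equal sign'' choices account for $0$ appearing both in $S_{\gamma_0}$ and $S_{\gamma_n}$). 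Since each $\gamma_m$ is minuscule, applying Proposition~\ref{minus} term by term yields $\sum_{m=0}^{n}N_{\lambda,\gamma_m}^{\beta}=2^{n}\mbox{R}_{\vert P_+^k}(\lambda,\beta)$.

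Second, I set $\chi(\sigma):=\sum_{m=0}^{n}\chi_{\gamma_m}(\sigma)$. Taking the $\chi_{\gamma_m}(0)/\chi(0)$-weighted combination of identity (\ref{qgamma}) gives
$$\frac{\chi_{\lambda}(\sigma)}{\chi_{\lambda}(0)}\,\frac{\chi(\sigma)}{\chi(0)}=\sum_{\beta\in P_+^k}q(\lambda,\beta)\frac{\chi_{\beta}(\sigma)}{\chi_{\beta}(0)},$$
where $q$ is the kernel of the proposition, now realised as the convex combination $q=\sum_m(\chi_{\gamma_m}(0)/\chi(0))\,q_{\gamma_m}$. Hence the discretised characters $(\chi_{\beta}(\sigma))_{\beta\in P_+^k}$ remain simultaneous eigenvectors, with eigenvalues $\chi(\sigma)/\chi(0)$. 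The proof of Proposition~\ref{invprob} goes through termwise (apply its key identity to each $\gamma_m$ and sum, using $\chi_{{}^t\gamma_m}(0)=\chi_{\gamma_m}(0)$), so the measure $\pi$ of Proposition~\ref{invprob} is $q$-invariant.

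Third, I would verify irreducibility and aperiodicity of $q$. Aperiodicity is immediate: $\gamma_0=\gamma_n=0$ forces $q(\lambda,\lambda)>0$. Irreducibility on $P_+^k$ follows because the steps of $V_{\gamma_1}$ alone shift the class of $\lambda$ modulo the root lattice cyclically, while composing $V_{\gamma_1}$ and $V_{\gamma_{n-1}}$ realises every root $e_i-e_j$. The standard convergence theorem then yields $q^{t}(x,y)\to\pi(y)$, i.e.
$$2^{nt}\mbox{R}^{t}_{\vert P_+^k}(x,y)\sim\chi(0)^{t}\,\frac{\chi_{x}(0)}{\chi_{y}(0)}\,\pi(y).$$
Since (\ref{DF1}) gives $\chi_{\mu}(0)=s(\mu)/\prod_{\alpha\in R_+}\sin(\pi(\rho\vert\alpha)/(k+n))$ and Proposition~\ref{invprob} gives $\pi(y)\propto s(y)^2$, the right-hand side equals, up to a constant independent of $(x,y)$, $\chi(0)^{t}\,s(x)\,s(y)$. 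A direct computation with (\ref{DF1}), using $(\gamma_m+\rho\,\vert\,e_i-e_j)=j-i+\mathbf{1}_{i\le m<j}$ and $(\rho\,\vert\,e_i-e_j)=j-i$, collapses the product to $\chi_{\gamma_m}(0)=\prod_{i=1}^{m}\prod_{j=m+1}^{n}\sin(\pi(j-i+1)/(k+n))/\sin(\pi(j-i)/(k+n))$, and summing over $m$ recovers the bracketed factor of the statement.

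The main obstacle is verifying that the spectral machinery of Section~\ref{Markovchain}, which is developed for a single minuscule increment $\gamma$, indeed passes to the convex combination $q$. The point is simultaneous diagonalisability: the family $(\chi_{\beta}(\sigma))_{\beta}$ diagonalises every $q_{\gamma_m}$ at once, so any convex combination inherits the same eigenbasis, the invariance of $\pi$, and hence the asymptotic of Proposition~\ref{asymptotic}; once this is clearly stated, the rest of the proof is bookkeeping.
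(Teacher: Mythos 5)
Your proposal is correct and follows essentially the same route as the paper: identify the projected diagonal steps with the weights of the minuscule exterior powers $\gamma_m$, sum Proposition \ref{minus} over $m$ to get $\sum_{m}N_{\lambda,\gamma_m}^{\beta}=2^{n}\mathrm{R}_{\vert P_+^k}(\lambda,\beta)$, Doob-transform by $\chi_{\cdot}(0)$ with normalization $\sum_m\chi_{\gamma_m}(0)$, and conclude via the ergodic asymptotics of Proposition \ref{asymptotic} together with the sine-product evaluation of $\chi_{\gamma_m}(0)$ from (\ref{DF1}). The only difference is that you make explicit what the paper leaves implicit — that the discretized characters simultaneously diagonalize the convex combination $q=\sum_m(\chi_{\gamma_m}(0)/\chi(0))q_{\gamma_m}$, that $\pi$ remains invariant termwise, and why the chain is irreducible and aperiodic — all of which are correct verifications rather than a departure from the paper's argument.
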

\subsection{Alcove of type C} 
When $K$ is the symplectic group $Sp(n)$, we have $R=\{\frac{1}{\sqrt{2}}(\pm e_i\pm e_j),\pm \sqrt{2}e_i\}$, 
$\Sigma=\{\frac{1}{\sqrt{2}}(e_1-e_2),\dots,\frac{1}{\sqrt{2}}(e_{n-1}-e_n),\sqrt{2}e_n\},$ $P_+=\{\lambda\in \R^n: \sqrt{2}\lambda_n\in\N, \sqrt{2}(\lambda_i-\lambda_{i+1})\in \N\}$, $ \theta^\vee=\sqrt{2}e_1$,  $P_+^k=\{\lambda\in P_+: \sqrt{2}\lambda_1\le k\}$, $\rho=\frac{\sqrt{2}}{2}\sum_i(n-i+1)e_i$ and $h^\vee=n+1$.
\paragraph{\it Standard steps} The random walk $(Y(k))_{k\ge 0}$ has uniformly distributed steps  on $\{\pm e_1,\dots,\pm e_n\}$, which is the set of weights of the standard representation of type $C_n$.  This standard representation   is a minuscule representation. Thus by proposition \ref{minus}, for $\gamma=e_1$, the Markov Kernel $q_\gamma$ defined by (\ref{qgamma}) is in this case defined by
$$q_\gamma(x,y)=\frac{\chi_y(0)}{\chi_x(0)\chi_\gamma(0)}2n\mbox{S}_{\vert P_+^k}(x,y), \, x,y\in \R^n,$$
where  $\chi_x(0)$ equals
\begin{align*}
\prod_{1\le i<j\le n}\frac{\sin(\pi\frac{\frac{1}{\sqrt{2}}(x_i-x_j)+\frac{1}{2}(j-i))}{k+n+1})}{ \sin(\pi\frac{\frac{1}{2}(j-i)}{k+n+1})}\frac{\sin(\pi\frac{\frac{1}{\sqrt{2}}(x_i+x_j)+\frac{1}{2}(2n+2-j-i))}{k+n+1})}{ \sin(\pi\frac{\frac{1}{2}(2n+2-j-i))}{k+n+1})}\prod_{i=1}^n\frac{\sin(\pi\frac{\sqrt{2}x_i+n-i+1}{k+n+1})}{\sin(\pi\frac{n-i+1}{k+n+1})}.
\end{align*}
Moreover, the chain is irreducible with period $2$.  Thus one obtains the following proposition.
\begin{prop}Let $x,y\in P_+^k$. We write $y-x=\sum_{i=1}^n
n_ie_i$ and  define $r$ by $\sum_i n_i=r\mod(2)$. Then the number of standard walks from $x$ to $y$ remaining in $P_+^k$ after $2t+r$ steps for large $t$,  is equivalent to 
\begin{align*} 
&\Big[ \frac{\sin(\pi\frac{\sqrt{2}+n}{k+n+1})}{\sin(\pi\frac{n}{k+n+1})}\prod_{i=2}^n\frac{\sin(\pi\frac{i-\sqrt{2}+1}{2k+2n+2})}{\sin(\pi\frac{i-1}{2k+2n+2})}\frac{\sin(\pi\frac{\sqrt{2}+2n+1-i}{2k+2n+2})}{\sin(\pi\frac{2n+1-i}{2k+2n+2})}\Big]^{2t+r}\\
& \times\prod_{1\le i<j\le n}\sin(\pi\frac{\frac{1}{\sqrt{2}}(x_i-x_j)+\frac{1}{2}(j-i))}{k+n+1})\sin(\pi\frac{\frac{1}{\sqrt{2}}(x_i+x_j)+\frac{1}{2}(2n+2-j-i))}{k+n+1})\\ 
& \quad\times\prod_{i=1}^n \sin(\pi\frac{\sqrt{2}x_i+n-i+1}{k+n+1}) \prod_{1\le i<j\le n}\sin(\pi\frac{\frac{1}{\sqrt{2}}(y_i-y_j)+\frac{1}{2}(j-i))}{k+n+1}) \\
&\quad \quad \times \prod_{1\le i<j\le n}\sin(\pi\frac{\frac{1}{\sqrt{2}}(y_i+y_j)+\frac{1}{2}(2n+2-j-i))}{k+n+1}) \prod_{i=1}^n \sin(\pi\frac{\sqrt{2}y_i+n-i+1}{k+n+1}).
\end{align*}
\end{prop}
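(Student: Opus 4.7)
The plan is to invoke Proposition \ref{asymptotic} with $\gamma = e_1 \in P_+^k$, the highest weight of the standard representation of $\mathfrak{sp}_n$, which is minuscule with $S_\gamma = \{\pm e_1,\dots,\pm e_n\}$. Since this is exactly the set of increments of $(Y(k))$, Proposition \ref{minus} identifies the number of walks from $x$ to $y$ of length $m$ remaining in $P_+^k$ with the fusion coefficient $N_{x,\gamma,m}^y$.

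Before applying the asymptotic formula one needs to determine the period of $q_\gamma$. Each step $\pm e_j$ changes $\sum_i X_i(k)$ by $\pm 1$, so $N_{x,\gamma,m}^y = 0$ unless $\sum_i n_i \equiv m \pmod 2$; this forces $m\equiv r \pmod 2$, and an inspection of two-step loops $+e_j,-e_j$ inside $P_+^k$ shows that the period is exactly $d=2$. Irreducibility of $q_\gamma$ on the relevant parity class of $P_+^k$ follows from the fact that the vectors $\pm e_i$ generate the translation group of that sublattice and that, since the alcove is connected, one can always link any two points of $P_+^k$ by a sequence of such moves staying inside. Proposition \ref{asymptotic}(2) then yields
$$
N_{x,\gamma,2t+r}^y \underset{t\to\infty}{\sim} \frac{2\,\chi_\gamma(0)^{2t+r}}{\vert P/(k+n+1)M\vert}\,s(x)\,s(y).
$$

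The rest is an explicit evaluation using the type-$C_n$ data recalled in the section: $R_+ = \{\frac{1}{\sqrt 2}(e_i - e_j),\, \frac{1}{\sqrt 2}(e_i + e_j): i<j\} \cup \{\sqrt 2\, e_i\}$, $\rho = \frac{\sqrt 2}{2}\sum_i (n-i+1)e_i$ and $h^\vee = n+1$. Substituting into the definition $s(\lambda) = \prod_{\alpha\in R_+}\sin(\pi(\lambda+\rho\vert\alpha)/(k+n+1))$ and into formula (\ref{DF1}) for $\chi_\gamma(0)$, one splits each product into three pieces according to the three families of positive roots. For $\lambda=x$ (and likewise for $y$) these produce, respectively, the sine factors $\sin\bigl(\pi\frac{\frac{1}{\sqrt 2}(x_i-x_j)+\frac{1}{2}(j-i)}{k+n+1}\bigr)$, $\sin\bigl(\pi\frac{\frac{1}{\sqrt 2}(x_i+x_j)+\frac{1}{2}(2n+2-j-i)}{k+n+1}\bigr)$ and $\sin\bigl(\pi\frac{\sqrt 2\, x_i + n-i+1}{k+n+1}\bigr)$ appearing in the statement, while plugging $\gamma=e_1$ into (\ref{DF1}) yields the bracketed base of the $(2t+r)$-th power (the $i=1$ piece comes from the long root $\sqrt 2 e_1$, and the $i\ge 2$ pieces from the short roots $\frac{1}{\sqrt 2}(e_1\pm e_i)$, all other roots contributing a factor of $1$ since $(\gamma\vert\alpha)=0$). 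The overall constant $\frac{2}{\vert P/(k+n+1)M\vert}$ is absorbed into the implicit proportionality factor, exactly as in the preceding type-$A$ propositions.

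The main obstacle is purely algebraic bookkeeping: one must track the factors of $\sqrt 2$ coming from the normalization $(\theta^\vee\vert\theta^\vee)=2$ on both the short roots (of squared length $1$) and the long roots (of squared length $2$), and match them against the normalization $\vert e_i\vert = 1$ used for the steps of the walk. Once $(\rho\vert\alpha)$ and $(\gamma+\rho\vert\alpha)$ are computed correctly on each of the three families, identification with the stated formula is immediate.
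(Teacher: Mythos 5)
Your proposal is correct and takes essentially the same route as the paper, which obtains this proposition exactly as you do: identify the standard walk counts with fusion coefficients $N_{x,\gamma,m}^y$ via Proposition \ref{minus} for the minuscule standard representation of type $C_n$, note that the chain is irreducible with period $2$ (whence the parity constraint defining $r$), and apply Proposition \ref{asymptotic} together with the explicit type-$C_n$ data for $s(\lambda)$ and $\chi_\gamma(0)$. One minor remark: carrying out your evaluation of (\ref{DF1}) with $\gamma=e_1$ on the roots $\frac{1}{\sqrt{2}}(e_1-e_i)$ actually gives the factor $\sin\bigl(\pi\frac{i+\sqrt{2}-1}{2k+2n+2}\bigr)/\sin\bigl(\pi\frac{i-1}{2k+2n+2}\bigr)$, so the numerator $i-\sqrt{2}+1$ printed in the statement appears to be a typo which your $\sqrt{2}$-bookkeeping would in fact expose rather than reproduce.
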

\paragraph{\bf Alcove of type D}  When $K$ is the orthogonal group $SO(2n)$, we have $R=\{\pm e_i\pm e_j\}$, 
$\Sigma=\{e_1-e_2,\dots,e_{n-1}-e_n,e_{n-1}+e_n\},$ $P_+=\{\lambda\in \R^n: \lambda_{n-1}+\lambda_n\in\N,\,  \lambda_{i}-\lambda_{i+1}\in \N, i\in\{1,\dots,n-1\}\}$, $ \theta^\vee=e_1+e_2$, $P_+^k=\{\lambda\in P_+: \lambda_1+\lambda_2\le k\}$, $\rho= \sum_{i=1}^n(n-i)e_i$ and $h^\vee=2n-2$. 
\paragraph{\it Standard steps} The set of standard steps $\{\pm e_1,\dots,\pm e_n\}$ is also the set of weights of the standard representation of  type $D_n$, which is a minuscule representation with highest weight $e_1$.   
\begin{prop}
Let $x,y\in P_+^k$. We write $y-x=\sum_{i=1}^nk_ie_i$ and  define  $r$  as previously. Then the number of standard walks from $x$ to $y$ remaining in $P_+^k$ after $2t+r$ steps for large $t$,  is equivalent to  
\begin{align*} 
&\Big[ \prod_{i=2}^n\frac{\sin(\pi\frac{i}{k+2n-2})}{\sin(\pi\frac{i-1}{k+2n-2})}\frac{\sin(\pi\frac{2n-i}{k+2n-2})}{\sin(\pi\frac{2n-i-1}{k+2n-2})}\Big]^{2t+r}\\
& \times\prod_{1\le i<j\le n}\sin(\pi\frac{ x_i-x_j+j-i}{k+2n-2})\prod_{1\le i<j\le n} \sin(\pi\frac{x_i+x_j+2n-j-i}{k+2n-2})\\ 
& \times\prod_{1\le i<j\le n}\sin(\pi\frac{ y_i-y_j+j-i}{k+2n-2})\prod_{1\le i<j\le n} \sin(\pi\frac{y_i+y_j+2n-j-i}{k+2n-2})
\end{align*} 
\end{prop}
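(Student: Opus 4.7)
The plan is to imitate step-by-step the argument given for the symplectic alcove in the preceding proposition, substituting the $D_n$ root data. First I would observe that the standard representation of type $D_n$ is minuscule with highest weight $\gamma=e_1$ and that its set of weights is exactly the step set $\{\pm e_1,\dots,\pm e_n\}$ of $(Y(k))_{k\ge 0}$. Proposition~\ref{minus} then tells me that $N^{\beta}_{\lambda,\gamma,m}$ equals the number of standard walks from $\lambda$ to $\beta$ that remain in $P_+^k$ after $m$ steps, and that the restriction of $\mathrm{S}$ to $P_+^k$ is, up to the Doob $h$-transform by $\chi_{\cdot}(0)$, the Markov kernel $q_\gamma$ of Definition~\ref{defnq}.

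Next I would verify the two hypotheses of Proposition~\ref{asymptotic}. Since every step $\pm e_j$ changes the parity of $\sum_i \lambda_i$, from two lattice points $\lambda,\beta$ with $\beta-\lambda=\sum_i k_i e_i$ the walk can only travel in a number of steps of the same parity as $\sum_i k_i$: this gives the announced value of $r$ and shows that the period divides $2$, equality being witnessed by any length-two loop $\lambda\to\lambda+e_1\to\lambda$. Irreducibility on the corresponding parity class of $P_+^k$ can be established exactly as in the $C_n$ proof: one realises translations by $\pm e_i$ as allowed single steps inside the alcove for points away from the walls, and uses $\mu(\theta^\vee)\in\{-1,0,1\}$ for every weight $\mu$ of $V_\gamma$ to propagate across the walls via the affine reflections generating $W_k$.

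Granting these two points, Proposition~\ref{asymptotic} immediately produces
\[
N^{\beta}_{\lambda,\gamma,2t+r}\sim\frac{2\,\chi_\gamma(0)^{2t+r}}{|P/(k+2n-2)M|}\,s(\lambda)\,s(\beta),
\]
and the remainder of the argument is explicit bookkeeping. Writing out $s(\lambda)$ with $R_+=\{e_i\pm e_j:i<j\}$, $\rho=\sum_i(n-i)e_i$ and $h^\vee=2n-2$ reads off directly the two double products of sines in $x$ and $y$ appearing in the statement. For the bracketed factor I would apply formula~(\ref{DF1}) to $\gamma=e_1$: since $\gamma+\rho=(n,n-2,n-3,\dots,0)$, only the positive roots containing $e_1$ give a non-trivial ratio, namely $(\gamma+\rho\vert e_1-e_j)/(\rho\vert e_1-e_j)=j/(j-1)$ and $(\gamma+\rho\vert e_1+e_j)/(\rho\vert e_1+e_j)=(2n-j)/(2n-j-1)$ for $j=2,\dots,n$, producing exactly the two simple products inside the brackets. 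The constant $2/|P/(k+2n-2)M|$ is absorbed into the ``equivalent to'' notation, as in the preceding propositions.

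The main obstacle, as in the $C_n$ case, is the irreducibility verification for $q_\gamma$ on the relevant parity class of $P_+^k$ (for small $n$, say $n=2$, the system decouples into $A_1\times A_1$ and the check is trivial); every other step is a mechanical application of Propositions~\ref{minus} and~\ref{asymptotic} together with the product formula~(\ref{DF1}).
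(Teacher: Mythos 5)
Your proposal is correct and takes essentially the same route the paper intends: the paper states this proposition with no separate proof, relying exactly as you do on the minuscularity of the standard $D_n$ representation together with Proposition \ref{minus}, on Proposition \ref{asymptotic} with period $d=2$ and $r$ determined by the parity of $\sum_i k_i$, and on evaluating $s(x)$, $s(y)$ and $\chi_{e_1}(0)$ via (\ref{DF1}) with $R_+=\{e_i\pm e_j\}$, $\rho=\sum_i(n-i)e_i$, $h^\vee=2n-2$, and your computations $(\gamma+\rho\vert e_1-e_j)=j$, $(\rho\vert e_1-e_j)=j-1$, $(\gamma+\rho\vert e_1+e_j)=2n-j$, $(\rho\vert e_1+e_j)=2n-j-1$ reproduce the stated bracket exactly. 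Your handling of the two delicate points (irreducibility on the relevant communicating class of $P_+^k$, and absorbing the constant $2/\vert P/(k+h^\vee)M\vert$ into the equivalence) is at least as careful as the paper's own unproved assertions on these matters.
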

\paragraph{\it Diagonal steps} The two half spin representations  of type $D_n$ have respective highest weight $\frac{1}{2}(e_1+\dots +e_n)$ and $\frac{1}{2}(e_1+\dots+e_{n-1}- e_n)$. They are minuscule and their weights are respectively $\{\frac{1}{2}\sum_i\epsilon_ie_i : \epsilon_i\in\{-1,1\},\, \prod_i\epsilon_i=1\}$ and $\{\frac{1}{2}\sum_i\epsilon_ie_i : \epsilon_i\in\{-1,1\},\, \prod_i\epsilon_i=-1\}$ . Thus the set of diagonal steps $\{\pm\frac{1}{2}e_1\pm \dots\pm \frac{1}{2}e_n\}$ is the disjoint union of  sets of weights of the two half spin representations.  Similar arguments as previously show the following proposition.
\begin{prop} Let $x,y\in P_+^k$. The number of walks with diagonal steps from $x$ to $y$ remaining in $P_+^k$ after $t$ steps for large $t$,  is equivalent to 
\begin{align*} 
&\Big[ \prod_{1\le i<j\le n}\frac{\sin(\pi\frac{1+2n-i-j}{k+2n-2})}{\sin(\pi\frac{2n-i-j}{k+2n-2})}+\prod_{1\le i<j\le n-1}\frac{\sin(\pi\frac{1+2n-i-j}{k+2n-2})}{\sin(\pi\frac{2n-i-j}{k+2n-2})}    \prod_{i=1}^{n-1}\frac{\sin(\pi\frac{1+n-i}{k+2n-2})}{\sin(\pi\frac{n-i}{k+2n-2})}\Big]^{2t+r}\\
& \times\prod_{1\le i<j\le n}\sin(\pi\frac{ x_i-x_j+j-i}{k+2n-2})\prod_{1\le i<j\le n} \sin(\pi\frac{x_i+x_j+2n-j-i}{k+2n-2})\\ 
& \times\prod_{1\le i<j\le n}\sin(\pi\frac{ y_i-y_j+j-i}{k+2n-2})\prod_{1\le i<j\le n} \sin(\pi\frac{y_i+y_j+2n-j-i}{k+2n-2}),
\end{align*}
where $r=1$ if the coordinates of $y-x$ are half integers and $r=0$ otherwise. 
\end{prop}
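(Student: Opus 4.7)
The plan is to mimic the structure used for the preceding propositions in this section (type $A$ diagonal steps and type $D$ standard steps) and reduce the counting problem to a Doob transform of a Markov chain whose asymptotics are governed by Proposition \ref{asymptotic}. Set $\gamma_1=\frac{1}{2}(e_1+\cdots+e_n)$ and $\gamma_2=\frac{1}{2}(e_1+\cdots+e_{n-1}-e_n)$, which are both minuscule. Since $\theta^\vee=e_1+e_2$, every weight $\mu=\frac{1}{2}\sum\epsilon_ie_i$ of $V_{\gamma_1}$ or $V_{\gamma_2}$ satisfies $\mu(\theta^\vee)=\frac{1}{2}(\epsilon_1+\epsilon_2)\in\{-1,0,1\}$, so Proposition \ref{minus} applies to each of them, and the disjoint union $S_{\gamma_1}\sqcup S_{\gamma_2}$ is exactly the set of $2^n$ diagonal steps.

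Iterating the fusion rule and using the argument in the proof of Proposition \ref{minus} (the boundary term $\chi_{\lambda+\mu}$ vanishes whenever a walk would leave $P_+^k$), one gets by induction on $t$ that
\begin{equation*}
\chi_\lambda(\chi_{\gamma_1}+\chi_{\gamma_2})^t=\sum_{\beta\in P_+^k}c_{\lambda,t}^\beta\,\chi_\beta,
\end{equation*}
where $c_{\lambda,t}^\beta$ is exactly the number of walks from $\lambda$ to $\beta$ with $t$ diagonal steps, remaining in $P_+^k$ (at each step one chooses both a representation $\gamma_i$ and a weight of it, which together amounts to picking a step in $S_{\gamma_1}\sqcup S_{\gamma_2}$). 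Define the Markov kernel
\begin{equation*}
\tilde q(\lambda,\beta)=\frac{\chi_\beta(0)}{\chi_\lambda(0)\bigl(\chi_{\gamma_1}(0)+\chi_{\gamma_2}(0)\bigr)}\bigl(N_{\lambda,\gamma_1}^\beta+N_{\lambda,\gamma_2}^\beta\bigr),
\end{equation*}
which is the Doob $h$-transform of the alcove-restricted diagonal walk using the Perron-Frobenius eigenfunction $\chi_\beta(0)$. Since each diagonal step changes the parity of the coordinates (integer $\leftrightarrow$ half-integer), $\tilde q$ has period $2$ and the integer $r$ given in the statement is exactly the common parity class of the pair $(\lambda,\beta)$.

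To conclude, Proposition \ref{invprob} provides the $\tilde q$-invariant measure $\pi(\beta)\propto s(\beta)^2$, and Proposition \ref{asymptotic} (whose proof adapts verbatim to the Markov kernel $\tilde q$ attached to the virtual character $\chi_{\gamma_1}+\chi_{\gamma_2}$) yields
\begin{equation*}
c_{\lambda,2t+r}^\beta\underset{t\to\infty}{\sim}\frac{2\,\bigl(\chi_{\gamma_1}(0)+\chi_{\gamma_2}(0)\bigr)^{2t+r}}{|P/(k+h^\vee)M|}\,s(\lambda)\,s(\beta).
\end{equation*}
Using the positive roots $R_+=\{e_i\pm e_j:i<j\}$ and $h^\vee=2n-2$, formula (\ref{DF1}) evaluates $\chi_{\gamma_1}(0)$ and $\chi_{\gamma_2}(0)$: the pairings $(\gamma_i\vert e_j-e_\ell)$ vanish except $(\gamma_2\vert e_i-e_n)=1$, while $(\gamma_1\vert e_i+e_j)=1$ for all $i<j$ and $(\gamma_2\vert e_i+e_j)=1$ only for $j\le n-1$. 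This yields exactly the two terms inside the bracket of the claim, and $s(\lambda)s(\beta)$ unfolds into the remaining four products of sines.

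The main technical step—and the only point where I expect a little care—is justifying that the iterated expansion of $\chi_\lambda(\chi_{\gamma_1}+\chi_{\gamma_2})^t$ still enumerates alcove-staying walks; this is essentially the observation that the single-step argument in Proposition \ref{minus} is applicable to \emph{either} of the two minuscule weights, so one inducts on $t$ by applying it to each factor in turn. Everything else is a mechanical instantiation of the machinery already set up, together with a bookkeeping computation of $\chi_{\gamma_i}(0)$ and $s(\lambda)$ in the type $D$ coordinates.
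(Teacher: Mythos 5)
Your proof is correct and is essentially the paper's own argument: the paper justifies this proposition only with the phrase ``Similar arguments as previously,'' referring to exactly the scheme you carry out (the diagonal steps as the disjoint union of the weight sets of the two minuscule half-spin representations, Proposition \ref{minus} applied to each factor so that the iterated expansion of $\chi_\lambda(\chi_{\gamma_1}+\chi_{\gamma_2})^t$ enumerates alcove-staying walks, the Doob transform by $\chi_{\cdot}(0)$, and Propositions \ref{invprob} and \ref{asymptotic} with period $d=2$), and your evaluation of $\chi_{\gamma_1}(0)$ and $\chi_{\gamma_2}(0)$ via (\ref{DF1}) reproduces the two terms in the bracket. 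The only detail worth making explicit is that $\tilde q$ is the convex combination of $q_{\gamma_1}$ and $q_{\gamma_2}$ with weights $\chi_{\gamma_i}(0)/\bigl(\chi_{\gamma_1}(0)+\chi_{\gamma_2}(0)\bigr)$, which makes the $\tilde q$-invariance of $\pi$ and the adaptation of Proposition \ref{asymptotic} immediate (irreducibility of the diagonal walk on the alcove being taken for granted here, as it is in the paper's analogous cases).
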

\paragraph{\bf Alcove of type B} When $K$ is the orthogonal group $SO(2n+1)$, we have  $R=\{\pm e_i\pm e_j,\pm e_i\}$, 
$\Sigma=\{e_1-e_2,\dots,e_{n-1}-e_n,e_n\},$ $P=\{\lambda\in \R^n: \lambda_n\in \N,\, \lambda_i-\lambda{i+1}\in \N, i\in \{1,\dots,n-1\}\}$, $ \theta^\vee=e_1+e_2$, $P_+^k=\{\lambda\in P_+: \lambda_1+\lambda_2\le k\}$, $\rho=\sum_i(n-i+\frac{1}{2})e_i$, and $h^\vee=2n-1$.
\paragraph{\it Standard steps} The set of weights of the standard representations of type $B_n$ is   $\{\pm e_1,\dots,\pm e_n,0\}$. Let us consider the Littelmann module $B\pi_{e_1}$. We have $B_{\pi_{e_1}}=\{\pi_{\pm e_i}, \pi_0\}$ where  $\pi_0$ is defined on $[0,1]$ by  $\pi_0(t)=-te_n1_{t\le \frac{1}{2}}+(1-t)e_n1_{t\ge \frac{1}{2}}$, i.e. $\pi_0$ is the concatenation of $\pi_{-e_2}$ and $\pi_{e_2}$ in the sense of Littelmann.  This standard representation  is a quasi-minuscule representation  satisfying hypothesis of proposition \ref{quasi}.  Its   highest weight is $e_1$. 
\begin{prop} Let $x,y\in P_+^k$. For large $t$ the number of paths from $\pi_x*(B\pi_{e_1})^t$ ending at $y$ and remaining in $P_+^k$ is equivalent to 
\begin{align*}
&\Big[\frac{\sin(\pi\frac{\frac{1}{2}+n}{k+2n-1})}{\sin(\pi\frac{n-\frac{1}{2}}{k+2n-1})}\prod_{i=2}^n\frac{\sin(\pi\frac{i}{k+2n-1})}{\sin(\pi\frac{i-1}{k+2n-1})}\frac{\sin(\pi\frac{2n+1-i}{k+2n-1})}{\sin(\pi\frac{2n-i}{k+2n-1})}\Big]^{t}\\
& \times\prod_{1\le i<j\le n}\sin(\pi\frac{x_i-x_j+j-i}{k+2n-1})\sin(\pi\frac{x_i+x_j+2n+1-i-j}{k+2n-1})\prod_{i=1}^n\sin(\pi\frac{x_i+n-\frac{1}{2}}{k+2n-1})\\
& \times\prod_{1\le i<j\le n}\sin(\pi\frac{y_i-y_j+j-i}{k+2n-1})\sin(\pi\frac{y_i+y_j+2n+1-i-j}{k+2n-1})\prod_{i=1}^n\sin(\pi\frac{y_i+n-\frac{1}{2}}{k+2n-1}).
\end{align*}
\end{prop}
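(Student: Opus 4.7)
The plan is to deduce this proposition from the unnamed proposition at the end of section \ref{Markovchain}, which combines Propositions \ref{quasi} and \ref{asymptotic} to turn a path count $N_{\lambda,e_1,t}^{y}$ into an asymptotic of the form
$$\frac{d\,\chi_{e_1}(0)^{t}}{|P/(k+h^\vee)M|}\,s(\lambda)\,s(y).$$
What remains is then only to check its hypotheses and to compute the three ingredients $\chi_{e_1}(0)$, $s(\lambda)$, $s(y)$ explicitly for type $B_n$.

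First I would verify that $\gamma=e_1$, the highest weight of the standard representation of $B_n$, satisfies the assumptions of Proposition \ref{quasi}. The weights of $V_{e_1}$ are $\{0,\pm e_1,\dots,\pm e_n\}$, so $S_{e_1}=W\gamma\cup\{0\}$, i.e.\ $e_1$ is quasi-minuscule; and for every weight $\mu$ of $V_{e_1}$ one has $\mu(\theta^\vee)=\mu(e_1+e_2)\in\{-1,0,1\}$, as required. Next I would check that $q_{e_1}$ is irreducible with period $d=1$. Irreducibility follows from the fact that the weights $\pm e_i$ generate $P=\Z^n$ as a group, so any two points of $P_+^k$ are connected by a path in $P_+^k$ (staying in the interior and moving coordinate by coordinate). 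Aperiodicity follows from the presence of the zero weight: $B\pi_{e_1}$ contains the Littelmann path $\pi_0(t)=-te_n\mathbf 1_{t\le 1/2}+(1-t)e_n\mathbf 1_{t\ge 1/2}$, which ends at $0$, so for any $x$ whose concatenation $\pi_x*\pi_0$ stays in $\mathcal C^k$ one has $q_{e_1}(x,x)>0$. Hence $d=1$ and in the conclusion of the quoted proposition we may take $r=0$ for every pair $(\lambda,y)$.

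It then suffices to specialize the factor $\chi_{e_1}(0)^{t}\,s(\lambda)\,s(y)$ up to the multiplicative constant $1/|P/(k+2n-1)M|$, which does not depend on $(\lambda,y)$ and is absorbed in the stated equivalence. For $\chi_{e_1}(0)$ I would use the dimension formula \eqref{DF1}: the positive roots of $B_n$ are $e_i-e_j$ and $e_i+e_j$ for $i<j$ together with the $e_i$. Writing $\rho_i=n-i+\tfrac12$, the ratio $(e_1+\rho\mid\alpha)/(\rho\mid\alpha)$ is trivial for every root not involving the index $1$; the remaining roots $e_1-e_j$, $e_1+e_j$ ($j=2,\dots,n$), and $e_1$ produce exactly the three families of sine ratios in the bracketed constant of the statement. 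The factor $s(\lambda)$ is obtained directly from the definition
$$s(\lambda)=\prod_{\alpha\in R_+}\sin\!\Bigl(\pi\frac{(\lambda+\rho\mid\alpha)}{k+2n-1}\Bigr),$$
partitioning $R_+$ into long, short and simple-root-type families, and $s(y)$ by the same computation; this gives exactly the six sine products appearing in the statement.

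The only genuinely non-routine step is the second one (irreducibility and aperiodicity of $q_{e_1}$). Aperiodicity here crucially uses the quasi-minuscule character of the representation through the null path $\pi_0\in B\pi_{e_1}$; for the truly minuscule propositions earlier in section \ref{applications} this is replaced by an explicit parity analysis, but in the $B_n$ case the presence of the zero weight removes the periodicity entirely, which is exactly why no parameter $r$ appears in the statement. The remaining computations of $\chi_{e_1}(0)$ and $s(\cdot)$ are bookkeeping once the positive-root system is listed.
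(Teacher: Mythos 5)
Your proposal is correct and is essentially the paper's own (implicit) argument: the paper states this proposition without further proof, having just observed that the standard $B_n$ representation is quasi-minuscule with all weights $\mu$ satisfying $\mu(\theta^\vee)\in\{0,\pm 1\}$, so that the result is exactly Proposition \ref{quasi} combined with Proposition \ref{asymptotic} (with $d=1$, the aperiodicity coming, as you say, from the null path $\pi_0\in B\pi_{e_1}$), followed by the $B_n$ root-data evaluation of $\chi_{e_1}(0)$ via (\ref{DF1}) and of $s(x)$, $s(y)$. Two harmless caveats: for $B_n$ the weight lattice is $\mathbb{Z}^n\cup(\mathbb{Z}+\tfrac{1}{2})^n$ rather than $\mathbb{Z}^n$, so $q_{e_1}$ is irreducible only on each coset of $\mathbb{Z}^n$ (which suffices, since $N_{x,e_1,t}^{y}=0$ unless $y-x\in\mathbb{Z}^n$, a looseness the paper itself shares), and your computation of the short-root factors actually yields $\sin\bigl(\pi\frac{x_i+n-i+\frac{1}{2}}{k+2n-1}\bigr)$, exposing a typo ($x_i+n-\frac{1}{2}$) in the paper's displayed formula.
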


\paragraph{\it Diagonal steps} The spin representation  is a minuscule representation with highest weight $\frac{1}{2}(e_1+\dots +e_n)$. Its weights are $\{\frac{1}{2}\sum_i\epsilon_ie_i : \epsilon_i\in\{-1,1\} \}$. Thus the diagonal steps  are the weights of the spin representation and  we have the following asymptotic.
\begin{prop} Let $x,y\in P_+^k$. The number of walks with diagonal steps from $x$ to $y$ remaining in $P_+^k$ after $t$ steps for large $t$,  is equivalent to
\begin{align*}
&\Big[\prod_{i=1}^n\frac{\sin(\pi\frac{n+1-i}{k+2n-1})}{\sin(\pi\frac{n-i+\frac{1}{2}}{k+2n-1})}\prod_{1\le i<j\le n}\frac{\sin(\pi\frac{2n+2-i-j}{k+2n-1})}{\sin(\pi\frac{2n-i-j}{k+2n-2})}\Big]^{2t+r}\\
& \times\prod_{1\le i<j\le n}\sin(\pi\frac{x_i-x_j+j-i}{k+2n-1})\sin(\pi\frac{x_i+x_j+2n+1-i-j}{k+2n-1})\prod_{i=1}^n\sin(\pi\frac{x_i+n-\frac{1}{2}}{k+2n-1})\\
& \times\prod_{1\le i<j\le n}\sin(\pi\frac{y_i-y_j+j-i}{k+2n-1})\sin(\pi\frac{y_i+y_j+2n+1-i-j}{k+2n-1})\prod_{i=1}^n\sin(\pi\frac{y_i+n-\frac{1}{2}}{k+2n-1}),
\end{align*}
where $r=1$ if the coordinates of $y-x$ are half integers and $r=0$ otherwise. 
\end{prop}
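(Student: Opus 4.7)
The strategy is to specialize the general asymptotic result in Proposition~\ref{asymptotic} to the spin representation of $SO(2n+1)$, exactly as was done for standard steps in type $B$ and for diagonal steps in type $D$ earlier in this section.

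First I identify the relevant representation. Let $\gamma=\frac{1}{2}(e_1+\cdots+e_n)$, which is a minuscule dominant weight for $B_n$ whose set of weights $S_\gamma$ is $\{\frac{1}{2}\sum\epsilon_i e_i:\epsilon_i\in\{\pm1\}\}$. Thus $S_\gamma$ is precisely the set of diagonal steps, and the simple random walk with diagonal increments has Markov kernel $p_\gamma$ in the notation of Section~\ref{Markovchain}. Since $\gamma$ is minuscule, Proposition~\ref{minus} applies and gives that $N_{x,\gamma,t}^y$ counts exactly the walks from $x$ to $y$ with diagonal steps remaining in $P_+^k$ after $t$ steps.

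Next I analyze the periodicity of $q_\gamma$. Every step changes each coordinate by $\pm\frac{1}{2}$, so after $t$ steps every coordinate of the displacement is congruent to $t/2$ modulo $1$. Hence for $x,y\in P_+^k$ reachable after $t$ steps, the coordinates of $y-x$ are integers if $t$ is even and half-integers if $t$ is odd; in particular $q_\gamma$ has period $d=2$, with $r$ determined as in the statement. Irreducibility is established by exhibiting concrete sequences of diagonal steps joining any two alcove points of the correct parity, which is straightforward since the $2^n$ signs give a very rich step set.

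Now Proposition~\ref{asymptotic} yields
\begin{equation*}
N_{x,\gamma,2t+r}^y\sim\frac{2\,\chi_\gamma^{2t+r}(0)}{|P/(k+h^\vee)M|}\,s(x)\,s(y).
\end{equation*}
It remains to make this fully explicit. With $h^\vee=2n-1$, $\rho=\sum_i(n-i+\tfrac12)e_i$ and $R_+=\{e_i\pm e_j:i<j\}\cup\{e_i\}$, the formula (\ref{DF1}) gives
\begin{equation*}
\chi_\gamma(0)=\prod_{\alpha\in R_+}\frac{\sin(\pi(\gamma+\rho|\alpha)/(k+2n-1))}{\sin(\pi(\rho|\alpha)/(k+2n-1))},
\end{equation*}
and since $\gamma+\rho=\sum_i(n-i+1)e_i$, the contribution of the roots $e_i-e_j$ is trivial, while the roots $e_i+e_j$ and $e_i$ produce the two bracketed products in the claim. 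Similarly, evaluating $s(x)=\prod_{\alpha\in R_+}\sin(\pi(x+\rho|\alpha)/(k+2n-1))$ produces the three sine products depending on $x$ (those in $x_i-x_j$, $x_i+x_j$ and $x_i$), and likewise for $s(y)$. Absorbing the $(x,y)$-independent factor $2/|P/(k+h^\vee)M|$ into the ``equivalent to'' equivalence yields the stated expression.

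The main obstacle is the verification of irreducibility of $q_\gamma$ on $P_+^k$ with period $2$, and the bookkeeping needed to identify the period class via $r$; once this is established, everything else is a direct substitution into (\ref{DF1}) and the asymptotic of Proposition~\ref{asymptotic}.
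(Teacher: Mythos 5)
Your proposal is correct and takes essentially the same route as the paper, which obtains this proposition with no further argument by combining the minusculity of the spin representation of $SO(2n+1)$ (so that Proposition \ref{minus} identifies $N_{x,\gamma,t}^y$ with the walk count), irreducibility with period $d=2$, the asymptotics of Proposition \ref{asymptotic}, and the explicit evaluation of (\ref{DF1}) and of $s(\cdot)$ with $h^\vee=2n-1$, $\rho=\sum_i(n-i+\frac{1}{2})e_i$ and $R_+=\{e_i\pm e_j: i<j\}\cup\{e_i\}$. One small remark: your substitution for the roots $e_i+e_j$ gives $\sin\big(\pi\frac{2n+1-i-j}{k+2n-1}\big)$ in the denominator of the bracketed factor, which silently corrects what appears to be a typo in the paper's displayed formula, where that denominator reads $\sin\big(\pi\frac{2n-i-j}{k+2n-2}\big)$ (carried over from the type $D$ case).
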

\section{Convolution on $K$  and fusion coefficients}\label{Orbitmethod}
In this section $K$ is supposed to be simply connected. The Kirillov orbit method  consists in establishing a correspondence between representations of $K$ and coadjoint orbits on $\mathfrak{k}^*$. For $\lambda\in \mathfrak{t}^*$, we denote by $\mathcal{O}(\lambda)$ the orbit of the coadjoint action of the group $K$ on $\lambda$. The fifth rule in the "User's guide" of \cite{Kirillov} is the following: if  what you want is to describe the decomposition of the tensor product of $V_\lambda\otimes V_\mu$  then what you have to do is to take the arithmetic sum ${\mathcal O}(\lambda)+{\mathcal O}(\mu)$ and split into coadjoint orbits. In this section, we establish that a  similar rule stands for   fusion product and convolution on $K$. if we denote by ${\mathcal O}(u)$ the orbit of the adjoint  action of $K$ on $u\in K$, informally the rule is : if you want to describe the fusion product of $V_\lambda$ and $V_\mu$ then    you have to take the product ${\mathcal O}(\exp(\nu^{-1}(\lambda))){\mathcal O}(\exp(\nu^{-1}(\mu)))$ and split into adjoint orbits for the adjoint action of $K$ on itself. Actually the fusion hypergroup can be seen as an approximation of the  hypergroup of conjugacy classes of $K$.

For $\alpha\in \Sigma$ the fundamental reflection $s_{\alpha^\vee}$ is defined on $\mathfrak{t}$ by $s_{\alpha^\vee}(x)=x-\alpha(x)\alpha^\vee$, for $x \in \mathfrak{t}$. We consider the extended affine Weyl group $\hat{W}$ generated by the reflections $s_{\alpha^\vee}$ and the translations $t_{\alpha^\vee}$ by $\alpha^\vee$, for $\alpha\in \Sigma$. The fundamental domain   for its action on $\mathfrak{t}$ is  
$$A=\{x\in \mathfrak{t}: \alpha_i(x)\ge 0,\,  \theta(x)\le 1\}.$$
Notice that 
$$\nu(A)=\{x\in \mathfrak{t}^*: (x\vert\alpha_i)\ge 0,\,x(\theta^\vee)\le 1\},$$
where  $\nu$ has been defined as the linear isomorphism 
\begin{align*}
\nu:\, \,&\mathfrak{k}\to \mathfrak{k}^*, \\
& h\mapsto (h\vert.).
\end{align*} 
 We can suppose without loss of generality that $K$ is a subgroup of a unitary group. The adjoint action of $K$ on itself, which is denoted by $\mbox{Ad}$,  is defined by $\mbox{Ad}(k)(u)=kuk^*$, $k,u\in K$. We consider the exponential map $\exp : \mathfrak{k}\to K$ defined by $\exp(x)=e^{2\pi x},$ where $e^{.}$ is the usual matrix exponential. We denote by $\Lambda$ the kernel of the restriction $\exp_{\vert \mathfrak{t}}$ and by $\Lambda^*$ the set of integral weights $ \{\lambda\in \mathfrak{t}^*: \lambda(\Lambda)\in \mathbb{Z}\} $, which is included in $P$ since $\alpha^\vee\in \Lambda$ (see \cite{Brocker}).  The application $\exp(x) \mapsto e^{2i\pi\lambda(x)}$ is well defined,  for $x\in \mathfrak{t}$, when $\lambda\in \Lambda^*$. The irreducible representations of $K$ are parametrized by the set $\Lambda^*_+=\Lambda^*\cap \mathcal{C}$.  Let $\rho_\lambda$   be the irreducible  representation with highest weight $\lambda\in \Lambda^*_+$. The character of $\rho_\lambda$ is defined as the trace of $\rho_\lambda(k)$, $k\in K$. We have $\Tr(\rho_\lambda(\exp(x)))=\ch_\lambda(x)$, $x\in \mathfrak{t}$.  The Peter-Weyl theorem ensures that a probability measure $\mu$ on $K$ which is invariant for the adjoint action of $K$, is caracterized by the Fourier coefficients
  $$\int_K\Tr(\rho_\lambda(k^{-1}))\, \mu(dk), \, \textrm{ for } \lambda \in \Lambda_+^*,$$
  and that a sequence of $\mbox{Ad}(K)$-invariant probability measures on $K$ weakly converges towards a measure if and only if the   Fourier coefficients converge towards those of this measure. We denote by $K/\mbox{Ad}(K)$ the quotient spaces of conjugacy classes. Recall that $K/\mbox{Ad}(K)$ is in one to one correspondence with $A$ when $K$ is simply connected (see \cite{Brocker}). 
  \begin{prop} \label{propH1}
Let $\xi$ and $\gamma$ be in $\nu(A)$.  Let $(\xi_{n})_{n\ge 1}$ and $(\gamma_{n})_{n\ge 1}$ be two sequences of elements in
$P_{+}$   such that  for every $k\in \N^*$,
$\xi_{k}\in P_{+}^k$, $\gamma_{k}\in P_{+}^k$, and such that $\frac{1}{k}\xi_k$ and $\frac{1}{k}\gamma_k$ respectively converge  to $\xi$ and $\gamma$, as $k$ tends to $+\infty$. 
Let us define  the sequence $(\mu_{k})_{k\ge 1}$ of probability measures on $\nu(A)$ by 
$$\mu_{k}=\sum_{\beta\in P^k_+}q_{\gamma_k}(\xi_k,\beta)\, \delta_{\frac{\beta+\rho}{k+h^\vee}},$$
where $q_{\gamma_k}$ is the Markov kernel of a random walk in $P_k^+$, defined in definition \ref{defnq}, with increment $\gamma_k$. Then $( \mu_{k})_{k\ge 1}$   weakly converges toward a measure $\mu$  on  $\nu(A)$, satisfying 
$$\frac{\ch_\lambda(-\nu^{-1}(\xi))}{\dim \lambda}\frac{\ch_\lambda(-\nu^{-1}(\gamma))}{\dim \lambda}=\int_{ \nu(A)}\frac{\ch_\lambda(-\nu^{-1}(\beta))}{\dim \lambda}\,  {\mu}(d\beta),$$
for every dominant weight $\lambda\in \Lambda_+$. \end{prop}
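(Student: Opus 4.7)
The plan is to test $\mu_k$ against the continuous functions
\[
F_\lambda(\beta)=\frac{\ch_\lambda(-\nu^{-1}(\beta))}{\dim\lambda},\qquad \lambda\in\Lambda^*_+,
\]
show convergence of these ``Fourier coefficients'' to the product $F_\lambda(\xi)F_\lambda(\gamma)$, and then upgrade this to weak convergence via Peter-Weyl and compactness of $\nu(A)$.

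The algebraic heart of the argument is the symmetry
\[
\frac{\chi_\lambda(\sigma)}{\chi_\lambda(0)}=\frac{\chi_\sigma(\lambda)}{\chi_\sigma(0)},\qquad \lambda,\sigma\in P_+^k,
\]
which one reads off of the Weyl character formula applied to $\chi_\lambda(\sigma)=\ch_\lambda(-\nu^{-1}((\sigma+\rho)/(k+h^\vee)))$: the numerator becomes $\sum_{w\in W}\det(w)e^{-2i\pi(\lambda+\rho\vert w(\sigma+\rho))/(k+h^\vee)}$, manifestly symmetric in $\lambda+\rho\leftrightarrow\sigma+\rho$ after the substitution $w\mapsto w^{-1}$, while the Weyl denominators account exactly for the ratio $\chi_\sigma(0)/\chi_\lambda(0)$. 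Fixing $\lambda\in\Lambda^*_+$ and taking $k$ large enough that $\lambda\in P_+^k$, I would then write
\[
\int_{\nu(A)}F_\lambda(\beta)\,\mu_k(d\beta)=\frac{1}{\dim\lambda}\sum_{\beta\in P_+^k}q_{\gamma_k}(\xi_k,\beta)\,\chi_\lambda(\beta),
\]
use the symmetry to convert $\chi_\lambda(\beta)$ into $\chi_\beta(\lambda)\chi_\lambda(0)/\chi_\beta(0)$, apply the eigenfunction identity (\ref{qgamma}) with $\sigma=\lambda$ to perform the sum in $\beta$, and invoke the symmetry once more to obtain
\[
\int_{\nu(A)}F_\lambda(\beta)\,\mu_k(d\beta)=\frac{\chi_\lambda(\xi_k)\chi_\lambda(\gamma_k)}{\dim\lambda\cdot\chi_\lambda(0)}.
\]
Continuity of $\ch_\lambda$ combined with $(\xi_k+\rho)/(k+h^\vee)\to\xi$, $(\gamma_k+\rho)/(k+h^\vee)\to\gamma$, $\rho/(k+h^\vee)\to 0$, and $\ch_\lambda(0)=\dim\lambda$ forces this expression to converge to $F_\lambda(\xi)F_\lambda(\gamma)$, which is precisely the right-hand side of the claimed identity.

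To conclude, $\nu(A)$ is compact so $(\mu_k)_{k\ge 1}$ is tight and admits weakly convergent subsequences. Since $-A$ is a fundamental domain parametrizing $K/\operatorname{Ad}(K)$ via $x\mapsto\operatorname{Ad}(K)\cdot\exp(x)$ (every conjugacy class has a unique preimage in $\exp(-A)$, being the inverse image of the corresponding class in $\exp(A)$), the Peter-Weyl theorem implies that $\{F_\lambda:\lambda\in\Lambda^*_+\}$ spans a uniformly dense subspace of $C(\nu(A))$. Hence the limits $F_\lambda(\xi)F_\lambda(\gamma)$ characterize a unique Borel probability measure $\mu$ on $\nu(A)$, all subsequential weak limits of $(\mu_k)$ coincide with $\mu$, and consequently $\mu_k\to\mu$ weakly with the stated integral identity. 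The main obstacle is the symmetry relation: once it is established carefully, everything else reduces to an algebraic manipulation of (\ref{qgamma}) and a standard harmonic-analysis argument on the compact group $K$.
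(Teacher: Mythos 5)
Your proposal is correct and follows essentially the same route as the paper: the same Weyl-character-formula symmetry $\chi_\lambda(\sigma)/\chi_\lambda(0)=\chi_\sigma(\lambda)/\chi_\sigma(0)$, combined with the eigenfunction identity (\ref{qgamma}) at $\sigma=\lambda$, yields $\int F_\lambda\,d\mu_k=\chi_\lambda(\xi_k)\chi_\lambda(\gamma_k)/(\dim\lambda\cdot\chi_\lambda(0))$, exactly the identity the paper derives, with the same limit computation using $\chi_\lambda(0)/\dim\lambda\to 1$. Your concluding tightness and Stone--Weierstrass argument merely makes explicit the Peter--Weyl characterization of weak convergence of $\mathrm{Ad}(K)$-invariant measures that the paper states just before the proposition and then invokes implicitly.
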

\begin{proof} Let $\lambda\in\Lambda_+$. 
Note that $\lambda(\theta^\vee)\le k$ for $k$ sufficiently large.  The weyl character formula implies  
\begin{align*}
\chi_\lambda(\xi_k)\chi_\lambda(\gamma_k)=\chi_{\xi_k}(\lambda)\frac{\chi_\lambda(0)}{\chi_{\xi_k}(0)}\chi_{\gamma_k}(\lambda)\frac{\chi_\lambda(0)}{\chi_{\gamma_k}(0)}.
\end{align*}
Thus 
\begin{align*}
\frac{\chi_\lambda(\xi_k)}{\dim(\lambda)}\frac{\chi_\lambda(\gamma_k)}{\dim(\lambda)}&=\sum_{\beta\in P_+^k} N_{\xi_k,\gamma_k}^\beta\frac{\chi_{\beta}(0)}{\chi_{\xi_k}(0)\chi_{\gamma_k}(0)}\frac{\chi_{\lambda}(0)}{\dim(\lambda)}\frac{\chi_\lambda(\beta)}{\dim(\lambda)}.
\end{align*}   
and 
$$\frac{\ch_\lambda(-\nu^{-1}(\frac{\xi_k+\rho}{k+h^\vee}))}{\dim(\lambda)}\frac{\ch_\lambda(-\nu^{-1}(\frac{\gamma_k+\rho}{k+h^\vee}))}{\dim(\lambda)}=\frac{\chi_{\lambda}(0)}{\dim(\lambda)}\int_{\nu(A)}\frac{\mbox{ch}_\lambda(-\nu^{-1}(\beta))}{\dim \lambda}\, \mu_k(d\beta).$$
As $\frac{\chi_{\lambda}(0)}{\dim(\lambda)}$ tends to $1$ as $k$ goes to infinity, proposition follows.
 
\end{proof}
For $\lambda\in \Lambda_+^*$ the function $\psi_\lambda: K\to \C$ defined by $\psi_\lambda(u)=\frac{\Tr(\rho_\lambda(u))}{\dim(\lambda)}, u\in K,$ satisfies 
  \begin{align}\label{spherical} \forall u,v\in K, \quad \int_K\psi_\lambda(kuk^{-1}v)\, dk=\psi_\lambda(u)\psi_\lambda(v),
  \end{align}
  where $dk$ is the normalized Haar measure on $K$, i.e. the function $\psi_\lambda$ is spherical. 
Thus proposition \ref{propH1}  establishes a correspondence between fusion coefficients and convolution on $K$.  We have the following corollary. 
 
\begin{cor}\label{corOM} Let $\xi$ and $\gamma$ be in $\nu(A)$. If $ {\mu}$ is the limit measure of proposition \ref{propH1} associated to $\xi$ and $\gamma$, and  $u$ is a random variable distributed according to the normalized Haar measure on $K$, then the random variable $\exp(\nu^{-1}(\xi))u\exp(\nu^{-1}(\gamma))u^{*}$ has the same law as $u\exp(\nu^{-1}(\beta))u^{*}$, where $\beta$ is distributed according to $\mu$.
\end{cor}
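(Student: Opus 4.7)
The plan is to identify the two laws through their Fourier coefficients, exactly as suggested by the Peter--Weyl characterization of $\mathrm{Ad}(K)$-invariant probability measures recalled just before the statement. Write $X := \exp(\nu^{-1}(\xi))\,u\,\exp(\nu^{-1}(\gamma))\,u^{*}$ and $Y := u\exp(\nu^{-1}(\beta))u^{*}$ (with $\beta\sim\mu$ independent of $u$). It will suffice to prove that for every $\lambda\in\Lambda^{*}_{+}$,
$$\mathbb{E}[\Tr(\rho_\lambda(X^{-1}))] = \mathbb{E}[\Tr(\rho_\lambda(Y^{-1}))],$$
and then conclude.

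The $Y$-side is the easy half: $Y$ is manifestly $\mathrm{Ad}(K)$-invariant by left-invariance of the Haar measure, and conditioning on $\beta$ together with the conjugation-invariance of the trace gives directly
$$\mathbb{E}[\Tr(\rho_\lambda(Y^{-1}))] \;=\; \int_{\nu(A)} \mathrm{ch}_\lambda(-\nu^{-1}(\beta))\,\mu(d\beta).$$

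For the $X$-side, set $a=\exp(\nu^{-1}(\xi))$ and $b=\exp(\nu^{-1}(\gamma))$. Using $X^{-1}=ub^{-1}u^{-1}a^{-1}$ and cyclicity of the trace, one can rewrite $\psi_\lambda(X^{-1})=\psi_\lambda(u^{-1}a^{-1}ub^{-1})$. After the change of variable $k=u^{-1}$ (which preserves Haar), integrating in $u$ and invoking the spherical function identity (\ref{spherical}) collapses the integral:
$$\mathbb{E}[\psi_\lambda(X^{-1})] \;=\; \int_K \psi_\lambda(k a^{-1} k^{-1} b^{-1})\,dk \;=\; \psi_\lambda(a^{-1})\,\psi_\lambda(b^{-1}).$$
Multiplying by $\dim(\lambda)$ and using $\Tr(\rho_\lambda(\exp(-\nu^{-1}(\cdot))))=\mathrm{ch}_\lambda(-\nu^{-1}(\cdot))$ yields $\mathrm{ch}_\lambda(-\nu^{-1}(\xi))\mathrm{ch}_\lambda(-\nu^{-1}(\gamma))/\dim(\lambda)$. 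The agreement with the $Y$-side is then exactly the content of proposition \ref{propH1} evaluated at $\xi$, $\gamma$ and $\lambda$, which completes the Fourier matching.

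The main subtle point (and the essential obstacle) is that $X$ is not literally $\mathrm{Ad}(K)$-invariant as a $K$-valued random variable: one checks that $\mathbb{E}[\rho_\lambda(X)]=\frac{\mathrm{ch}_\lambda(\nu^{-1}(\gamma))}{\dim\lambda}\rho_\lambda(a)$, which is not a scalar multiple of the identity. The equality of laws in the corollary therefore has to be read on the quotient $K/\mathrm{Ad}(K)$, equivalently as $\mathbb{E}[F(X)]=\mathbb{E}[F(Y)]$ for every class function $F$ on $K$; since the characters $\Tr\rho_\lambda$ span the class functions densely, the Fourier computation above suffices and Peter--Weyl closes the argument. (If one prefers a genuine equality of $K$-valued laws, it is natural to replace $X$ by $v X v^{*}$ with $v$ an independent Haar variable, which becomes $\mathrm{Ad}$-invariant without changing the Fourier coefficients computed above.)
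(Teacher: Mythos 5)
Your proof is correct and is essentially the argument the paper intends: the corollary is stated there without an explicit proof, as a direct consequence of proposition \ref{propH1} together with the spherical identity (\ref{spherical}), which is precisely your Fourier-coefficient matching closed by Peter--Weyl. Your additional remark is also accurate and a valid sharpening of what the paper leaves implicit: since $\mathbb{E}[\rho_\lambda(X)]=\frac{\mathrm{ch}_\lambda(\nu^{-1}(\gamma))}{\dim\lambda}\rho_\lambda(\exp(\nu^{-1}(\xi)))$ is not scalar, the law of $\exp(\nu^{-1}(\xi))\,u\exp(\nu^{-1}(\gamma))u^{*}$ is not $\mathrm{Ad}(K)$-invariant, so the asserted equality of laws must indeed be read on conjugacy classes (consistent with the paper's subsequent $SU(2)$ illustration, which is phrased in terms of radial parts), and your alternative fix of conjugating by an independent Haar variable handles this correctly.
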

Let $(\gamma_k)_{k\ge 1}$ be a sequence defined as in proposition \ref{propH1}. For $k\ge 1$, corollary \ref{corOM} implies that a random walk in $P_+^k$, with increment $\gamma_k$, can be seen as an approximation of an $\mbox{Ad}(K)$-invariant random walk in $K$, with steps uniformly distributed on $\mathcal{O}(\exp(\nu^{-1}(\gamma)))$.
 Notice that Dooley and Wildberger have established a correspondence between convolution on a compact group and convolution on its Lie algebra, and thus  between convolution on a compact group and tensor product of representations. They called this correspondence the wrapping map. It rests principally on the fact that Gelfand pairs  $(K\times K,K)$ and $(K \ltimes \mathfrak{k},K)$ have similar spherical functions. Nevertheless measures on the group $K$ that  they obtain from the wrapping map are signed measures. It is quite noticing that the measures obtained  considering fusion product, instead of tensor product, are positive measures on $K$.
\bigskip \paragraph{\bf Illustration} Let us illustrate corollary \ref{corOM} with the example of $K=SU(2)$. In that case, 
$$\mathfrak{k}=\{M\in \mathcal{M}_2(\C) : M+M^*=0\},$$
$$T=\{T_x= \left(
\begin{array}{cc}
e^{ 2i\pi x} &  0    \\
0  & e^{-2i\pi x}      
\end{array}
\right) : x\in[0,1]\}, \quad\mathfrak{t}=\{H_x=
\left(
\begin{array}{cc}
 ix &  0    \\
0  & -ix      
\end{array}
\right) : x\in \R\}.$$
There is a single positive root $\alpha$, which is defined by $\alpha(H_x)=2x$, $x\in \R$. Thus $\alpha^\vee=\theta^\vee=H_1$. The normalized inner product is defined by $(M\vert N)=\mbox{tr}(MN^*).$ 
$$A=\{H_{x/2}:  x\in[0,1]\},$$  
$$\exp(A)=\{\left(
\begin{array}{cc}
e^{\pi ix} &  0    \\
0  & e^{-\pi ix}      
\end{array}
\right) : x\in[0,1]\}.$$   Irreducible representations of $SU(2)$ have   highest weight $\lambda$ such that $\lambda(H_1)=n\in \N$. In that case, we write $n$ rather than $\lambda$  in  the level $k$ fusion coefficients., which are given by

$$N_{ij}^s= \left\{
    \begin{array}{ll}
        1 & \mbox{if } \vert i-j\vert\le s\le \min(i+j,2k- i-j),\textrm{ and }  i+j+s\in 2\Z\\
        0 & \mbox{otherwise.}
    \end{array}
\right.$$
For any $X$ in $SU(2)$ it exists a single $x\in [0,1]$ such that $X=k\exp(H_{x/2})k^{-1}$ for some $k\in SU(2)$. Let us call it the radial part of $X$. Corollary \ref{corOM} implies that if $U$ is distributed according to the Haar measure on $SU(2)$ the radial part of $UT_{x/2}U^{-1}T_{y/2}$, for $x,y\in[0,1]$, has a density defined on $\R$ by
$$\frac{1}{2} \frac{\pi \sin(\pi z)}{\sin(\pi x)\sin(\pi y)}1_{[u,v]}(z), \quad z\in \R,$$
where $u=\min(\vert x-y\vert,\min(x+y,2-(x+y)))$, $v= \max(\vert x-y\vert,\min(x+y,2-(x+y)))$.
This result should to be compared with the example of $SU(2)$ given in \cite{Dooley}.
 \section{Unitary Brownian motion and fusion coefficients}\label{Brownian}
A Brownian motion $(b_t)_{t\ge 0}$ on $K$ is defined as an $Ad(K)$-invariant continuous L\'evy process on $K$ whose semi-group $(\mu_t)_{t\ge 0}$ satisfies for any $\lambda\in \Lambda_+$,
 $$\int_K\psi_\lambda(g)\mu_t(dg)=e^{-ct[\vert \vert \lambda+\rho\vert \vert^2-\vert\vert \rho\vert\vert^2]}, \quad t\ge 0,$$
 where $c\in\R_+^*$. The radial process $(a_t)_{t\ge 0}$ associated to $(b_t)_{t\ge 0}$ is defined as the unique continuous process on $A$ such that  for any $t\ge 0$ it exists $k\in K$ such that $b_t=k \exp(a_t)k^*$. Notice that continuity is important for the definition to make sense.  Actually, when  $K$ is simply connected,  the conjugacy classes are in one-to-one correspondence with the fundamental domain $A$ and for a given process $(x_t)_{t\in \R_+}$, the associated radial process is defined with no ambiguity. In general, we know that the map from $(K/T,A)$ to $K_r$, which sends $(gT,v)$ to $g.\exp(v).g^*$,  where $K_r$ is  the set of regular elements of $K$, is a universal covering. Thus if $(x_t)_{t\ge0}$ is a continuous path such that $x\in K_r$ for any $t>0$ and $x_0=0$, the covering homotopy property and the fact that the exponential map is a local homeomorphism about the origin, implies that the radial part of a process $(x_t)_{t\ge 0}$, such that $x_0=0$ and $x_t\in K_r$ for all $t>0$, is well defined if we impose the continuity of the trajectories.  As a Brownian motion on $K$ lives, except at time $0$, in $K_r$, the associated radial process on $A$ is well defined.

 Let     $\gamma$ be a dominant weight. We consider a sequence $$(\Lambda_{[nt]}^{(n)},t\in \R_+)_{n\ge 1}$$ of random processes such that for any $n$,  $(\Lambda_k^{(n)})_{k\ge 1}$ is a Markov process in $P_+^{[\sqrt{n}]}$ with Markov kernel  defined by (\ref{defQ})  with level $[\sqrt n]$ fusion coefficients and discretized characters :   $(\Lambda_k^{(n)})_{k\ge 1}$ is the random walk in  $P_+^{[\sqrt{n}]}$ with increment $\gamma$ defined in definition \ref{defnq}. The following convergence is in the sense of convergence in distribution in $\mathcal{D}(\R_+,\mathfrak{t})$ endowed with the topology of uniform convergence on compact sets. 
 
 \begin{theo}\label{ConvD} The sequence  $(\frac{1}{\sqrt{n}}\nu^{-1}(\Lambda_{[nt]}^{(n)}),t\in \R_+)_{n\ge 1}$ of random processes converges towards the radial process associated to a Brownian motion on $K$.   
  \end{theo}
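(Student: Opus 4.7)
My plan is to identify the limit via its Fourier coefficients against the irreducible characters of $K$, and then combine the resulting semigroup convergence with a tightness argument. Let $\Sigma_t^{(n)}=\tfrac{1}{\sqrt n}\nu^{-1}(\Lambda_{[nt]}^{(n)})\in\mathfrak t$. By Peter--Weyl, the law of the radial Brownian motion $(a_t)$ at time $t$ is characterised by $\E[\psi_\mu(\exp(a_t))]=e^{-ct(\|\mu+\rho\|^2-\|\rho\|^2)}$ for $\mu\in\Lambda_+^*$, with $\psi_\mu=\ch_\mu/\dim V_\mu$ extended to $K$ as a class function, and the full semigroup started at an arbitrary point is determined by the analogous Fourier coefficients.

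Fix $\mu\in\Lambda_+^*$. For $n$ large enough that $\mu\in P_+^{[\sqrt n]}$, formula \eqref{qgamma} shows that $\beta\mapsto \chi_\beta(\mu)/\chi_\beta(0)$ is an eigenfunction of $q_\gamma$ with eigenvalue $\chi_\gamma(\mu)/\chi_\gamma(0)$, so iterating the Markov property yields
\begin{equation*}
\E^\lambda[\chi_{\Lambda_{[nt]}^{(n)}}(\mu)/\chi_{\Lambda_{[nt]}^{(n)}}(0)]=(\chi_\gamma(\mu)/\chi_\gamma(0))^{[nt]}\,\chi_\lambda(\mu)/\chi_\lambda(0).
\end{equation*}
Rewriting $\chi_\gamma(\mu)/\chi_\gamma(0)$ as the ratio $\psi_\gamma(\exp(-\nu^{-1}(\tfrac{\mu+\rho}{[\sqrt n]+h^\vee})))/\psi_\gamma(\exp(-\nu^{-1}(\tfrac{\rho}{[\sqrt n]+h^\vee})))$ and Taylor-expanding $\psi_\gamma$ at the identity, the linear term vanishes because $K$ is simple (so $\mathfrak t$ carries no nonzero $W$-invariant linear functional), and the quadratic term is forced by $\mathrm{Ad}$-invariance to be a constant multiple of $\|\cdot\|^2$, with the constant fixed by the Casimir eigenvalue of $V_\gamma$. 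This gives $\chi_\gamma(\mu)/\chi_\gamma(0)=1-\tfrac{c}{n}(\|\mu+\rho\|^2-\|\rho\|^2)+O(n^{-3/2})$ for an explicit $c>0$ depending only on $\gamma$, hence $(\chi_\gamma(\mu)/\chi_\gamma(0))^{[nt]}\to e^{-ct(\|\mu+\rho\|^2-\|\rho\|^2)}$.

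To interpret the left-hand side, a direct computation with the Weyl character and Weyl denominator formulas (a finite-dimensional Kirillov-type asymptotic) shows that whenever $\beta_n\in P_+^{[\sqrt n]}$ satisfies $\beta_n/\sqrt n\to b$ in the interior of $\nu(A)$,
\begin{equation*}
\chi_{\beta_n}(\mu)/\chi_{\beta_n}(0)\,\xrightarrow[n\to\infty]{}\,\psi_\mu(\exp(-\nu^{-1}(b))),
\end{equation*}
uniformly on compact subsets away from the walls. Choosing $\lambda=\lambda_n$ with $\lambda_n/\sqrt n\to b_0$ and combining the two preceding convergences via a dominated-convergence argument yields
$\E^{\lambda_n}[\psi_\mu(\exp(-\nu^{-1}(\Sigma_t^{(n)})))]\to \psi_\mu(\exp(-\nu^{-1}(b_0)))\cdot e^{-ct(\|\mu+\rho\|^2-\|\rho\|^2)}$, which is exactly the Fourier coefficient of the radial Brownian semigroup started at $\exp(-\nu^{-1}(b_0))$.

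Equivalently, the discrete generator $n(q_\gamma-I)$ converges, on the core of class functions $\{b\mapsto\psi_\mu(\exp(-\nu^{-1}(b)))\}_{\mu\in\Lambda_+^*}$ (dense by Peter--Weyl), to multiplication by $-c(\|\mu+\rho\|^2-\|\rho\|^2)$, which is the action of the Laplacian on $K$ on these eigenfunctions. By Trotter--Kurtz, this together with tightness identifies any weak-limit point of $(\Sigma^{(n)})$ as the radial Brownian motion. Tightness in $\mathcal D(\R_+,\mathfrak t)$ is straightforward: each increment $\Lambda_{k+1}^{(n)}-\Lambda_{k}^{(n)}$ lies in the finite set of weights of $V_\gamma$, so increments of $\Sigma^{(n)}$ are of size $O(n^{-1/2})$ and Aldous's criterion applies. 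The main obstacle I anticipate is securing the uniformity needed in the dominated-convergence step, namely controlling $\chi_\beta(\mu)/\chi_\beta(0)$ when $\beta/\sqrt n$ approaches the walls of the alcove where the sine factors of \eqref{DF1} vanish and the Kirillov-type asymptotic degenerates; this is where the explicit product formula for $\chi_\beta(0)$ and the positivity estimate \eqref{trunc} will have to be exploited.
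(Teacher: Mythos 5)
Your finite--dimensional part is essentially the paper's Lemma \ref{findim}: the eigenfunction relation for $q_\gamma$, the character duality, and an expansion of the eigenvalue $\chi_\gamma(\mu)/\chi_\gamma(0)$ (the paper invokes Wehn's central limit theorem on $K$ where you Taylor-expand $\psi_\gamma$ at the identity; the computations are equivalent, and your expansion is a legitimate elementary substitute). But the ``main obstacle'' you anticipate is illusory. By the Weyl character formula the relation
$\frac{\chi_\beta(\mu)}{\chi_\beta(0)}=\frac{\chi_\mu(\beta)}{\chi_\mu(0)}=\frac{\dim(\mu)}{\chi_\mu(0)}\,\psi_\mu\bigl(\exp(-\nu^{-1}(\tfrac{\beta+\rho}{[\sqrt n]+h^\vee}))\bigr)$
is an \emph{exact identity} for every $\beta\in P_+^{[\sqrt n]}$ (where $\chi_\beta(0)>0$), not merely an asymptotic valid away from the walls; since $\chi_\mu(0)/\dim(\mu)\to 1$, the convergence of the Fourier coefficients is immediate, with no Kirillov-type uniformity or dominated-convergence issue near the walls, and no need to spend \eqref{trunc} there. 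This is precisely how the paper's proof avoids any asymptotic analysis of $\chi_\beta(\mu)/\chi_\beta(0)$; the multi-time marginals then follow from the Markov property together with the sphericality identity \eqref{spherical}, as you indicate.

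The genuine gaps are elsewhere. First, tightness: single steps of size $O(n^{-1/2})$ do \emph{not} make Aldous's criterion ``straightforward'' --- over a time window of length $\delta$ the chain performs $\sim\delta n$ steps, so one must exclude displacements of order $\delta\sqrt n$, and the fusion walk is a Doob-type transform with state-dependent drift that is strong near the alcove walls, so i.i.d.-type maximal inequalities do not apply directly. The paper's actual mechanism (its Proposition \ref{tight}) is a comparison argument: the inequality \eqref{trunc} dominates the law of $(\Lambda^{(n)}_0,\dots,\Lambda^{(n)}_{[nT]})$ by that of the Littlewood--Richardson walk $Y$ up to the factor $\frac{\vert\chi_{Y_{[nT]}}(0)\vert}{\dim(Y_{[nT]})}\bigl[\frac{\dim(\gamma)}{\chi_\gamma(0)}\bigr]^{[nT]}$, shown uniformly bounded via the product-of-sines formula, and tightness of $\frac{1}{\sqrt n}Y_{[nt]}$ comes from \cite{BBO}: $Y=\mathcal{P}_{w_0}(\pi)$ with $\mathcal{P}_{w_0}$ continuous and commuting with scaling, combined with Donsker's theorem. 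Your proposal has no substitute for this step. Second, identification of the limit: the class functions $\psi_\mu$, $\mu\in\Lambda_+^*$, separate points of $K/\mathrm{Ad}(K)$ but, when $K$ is not simply connected, $\Lambda^*$ is a proper sublattice of $P$ and $\exp$ does not put $A$ in bijection with the conjugacy classes, so your Trotter--Kurtz argument on this ``core'' pins down any weak limit only as a process in $K/\mathrm{Ad}(K)$, not as the $\mathfrak t$-valued radial process claimed by the theorem. The paper closes exactly this gap by proving that every subsequential limit has continuous trajectories (bounded jumps plus theorem 10.2 of \cite{EthierKurtz}) and then invoking the covering homotopy property of the map $(gT,v)\mapsto g\exp(v)g^*$ onto the regular set $K_r$, which makes the continuous radial lift unique; this step is absent from your proposal.
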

  Theorem follows from lemma \ref{findim} and proposition \ref{tight}. 
  \begin{lem} \label{findim}
 As  $n$ goes to infinity, the sequence $$\Big(\exp\big[\frac{1}{\sqrt{n}}\nu^{-1}(\Lambda_{[nt]}^{(n)})\big],t\in \R_+\Big)_{n\ge 1}$$  of $K/\mbox{Ad}(K)$-valued random processes converges - in the sense of finite dimensional distributions convergence -  towards $(\exp(a_t))_{t\ge 0}$, where $(a_t)_{t\ge 0}$ is the radial process associated to a Brownian motion on $K$.
 \end{lem}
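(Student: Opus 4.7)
The plan is to reduce the claim to convergence of one-time marginals tested against a dense family of class functions on $K/\mbox{Ad}(K)$, and then to evaluate those marginals exactly via the spectral decomposition of $q_\gamma$. By Peter--Weyl, the normalised characters $\psi_\lambda$, $\lambda\in\Lambda_+^*$, span a dense subspace of $C(K/\mbox{Ad}(K))$, and the spherical property (\ref{spherical}) gives
$$\E_x[\psi_\lambda(\exp a_t)]=\psi_\lambda(\exp x)\,e^{-ct(\vert\vert\lambda+\rho\vert\vert^2-\vert\vert\rho\vert\vert^2)}$$
for the Brownian radial process. Using the Markov property of both processes and the $\mbox{Ad}(K)$-invariance of the laws, finite-dimensional convergence will therefore follow from the convergence of the one-step semigroups, i.e.\ from showing that, whenever $\lambda_n$ is such that $\tfrac{1}{\sqrt n}\nu^{-1}(\lambda_n)\to x\in A$,
$$\E_{\lambda_n}\!\Big[\psi_\lambda\big(\exp[\tfrac{1}{\sqrt n}\nu^{-1}(\Lambda_{[nt]}^{(n)})]\big)\Big]\longrightarrow\psi_\lambda(\exp x)\,e^{-ct(\vert\vert\lambda+\rho\vert\vert^2-\vert\vert\rho\vert\vert^2)}$$
for every $\lambda\in\Lambda_+^*$, $t>0$, and some $c=c(\gamma)>0$.

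Iterating (\ref{qgamma}) yields the exact spectral identity
$$\E_{\lambda_n}\!\Big[\frac{\chi_{\Lambda_{[nt]}^{(n)}}(\sigma)}{\chi_{\Lambda_{[nt]}^{(n)}}(0)}\Big]=\frac{\chi_{\lambda_n}(\sigma)}{\chi_{\lambda_n}(0)}\Big(\frac{\chi_\gamma(\sigma)}{\chi_\gamma(0)}\Big)^{[nt]},\qquad \sigma\in P_+^{[\sqrt n]}.$$
I would take $\sigma=\lambda$, which lies in $P_+^{[\sqrt n]}$ for $n$ large. Writing $F(\mu,\eta)=\sum_{w\in W}\det(w)e^{-2i\pi(w(\mu+\rho)|\eta)}$, the Weyl character formula gives $\ch_\mu(-\nu^{-1}\eta)=F(\mu,\eta)/F(0,\eta)$, and the substitution $w\mapsto w^{-1}$ yields the symmetry $F(\beta,(\lambda+\rho)/(k+h^\vee))=F(\lambda,(\beta+\rho)/(k+h^\vee))$ with $k=[\sqrt n]$. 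Combined with the Weyl-denominator expansion and the Weyl dimension formula, this gives
$$\frac{\chi_\beta(\lambda)}{\chi_\beta(0)}=\psi_\lambda\!\Big(\exp\!\big[-\nu^{-1}\tfrac{\beta+\rho}{k+h^\vee}\big]\Big)\cdot R_n(\lambda),$$
where the correction $R_n(\lambda)=\dim\lambda\cdot F(0,\rho/(k+h^\vee))/F(0,(\lambda+\rho)/(k+h^\vee))$ depends only on $\lambda$ (not on $\beta$) and tends to $1$; so the identification is uniform in $\beta\in P_+^{[\sqrt n]}$. Since $(\beta+\rho)/(k+h^\vee)$ and $\beta/\sqrt n$ differ by $O(1/\sqrt n)$ with $\psi_\lambda$ Lipschitz, and $\psi_\lambda(g^{-1})=\psi_{\lambda^*}(g)$ with $\vert\vert\lambda^*+\rho\vert\vert=\vert\vert\lambda+\rho\vert\vert$, taking expectations in the spectral identity converts its left-hand side into the target quantity.

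For the eigenvalue factor I Taylor-expand $\ch_\gamma$ near $0$: $W$-invariance of the weight multiset of $V_\gamma$ kills the linear term, and the quadratic term is a scalar multiple of the invariant norm squared, with constant $2\pi^2 c_\gamma$ determined by the quadratic Casimir eigenvalue on $V_\gamma$. This yields
$$\frac{\chi_\gamma(\lambda)}{\chi_\gamma(0)}=1-\frac{2\pi^2 c_\gamma}{n}\bigl(\vert\vert\lambda+\rho\vert\vert^2-\vert\vert\rho\vert\vert^2\bigr)+O(n^{-2}),$$
and raising to the $[nt]$-th power gives $e^{-2\pi^2 c_\gamma t(\vert\vert\lambda+\rho\vert\vert^2-\vert\vert\rho\vert\vert^2)}$ in the limit, matching the Brownian semigroup with $c=2\pi^2 c_\gamma$. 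The prefactor $\chi_{\lambda_n}(\lambda)/\chi_{\lambda_n}(0)$ converges to $\psi_\lambda(\exp(\nu^{-1}x))$ by the same identification applied to the initial datum, completing the single-time convergence. Joint convergence at $0<t_1<\cdots<t_r$ then follows by conditioning at successive times, since the limit semigroup is diagonal on each $\psi_\lambda$ and so is (asymptotically) the discrete semigroup.

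The hard part is controlling all the asymptotics uniformly in the starting point $\lambda_n$ on compact subsets of $A$, which is what legitimates the Markov conditioning step; pinning down the constant $c_\gamma$ via the quadratic Casimir and the paper's normalisation $(\theta^\vee|\theta^\vee)=2$ is essentially a bookkeeping check to confirm that the limit is indeed the radial part of a \emph{Brownian} motion on $K$ rather than some other time-scaled diffusion.
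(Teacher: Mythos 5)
Your proposal is correct, and its skeleton coincides with the paper's: the exact spectral identity obtained by iterating (\ref{qgamma}), the Weyl-character symmetry turning $\chi_\beta(\lambda)/\chi_\beta(0)$ into $\psi_\lambda\big(\exp\big[-\nu^{-1}\frac{\beta+\rho}{k+h^\vee}\big]\big)$ up to a $\beta$-independent factor $R_n(\lambda)\to 1$ (the paper writes this same factor as $\chi_\lambda(0)/\dim(\lambda)$), and the multi-time extension via the exact conditional identity combined with the spherical property (\ref{spherical}) are all precisely the paper's steps. The one genuine divergence is how the limiting semigroup is identified: the paper interprets the $[nt]$-th power of the eigenvalue as a Fourier coefficient of an $\mbox{Ad}(K)$-invariant convolution power and invokes Wehn's central limit theorem for such random walks on compact Lie groups \cite{Wehn}, which yields $\int_K\psi_\sigma\,d\mu_t$ directly; you instead Taylor-expand $\ch_\gamma$ at the identity by hand, killing the linear term by $W$-invariance of the weight multiset and identifying the quadratic term with a multiple of the invariant norm (legitimate here because $K$ is simple, so the reflection representation of $W$ is irreducible). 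Your route is more elementary and self-contained and makes the diffusion constant explicit, while the paper's route outsources the expansion and its uniformity to a black-box CLT; note that since the paper's definition of Brownian motion leaves the constant $c>0$ free, your closing Casimir bookkeeping is not actually needed to conclude. Two small corrections: the error term in your expansion of $\chi_\gamma(\lambda)/\chi_\gamma(0)$ should be $O(n^{-3/2})$ rather than $O(n^{-2})$ in general, since the cubic term $\sum_\beta K_\gamma^\beta(\beta\vert u)^3$ is a $W$-invariant cubic that need not vanish for non-self-dual $\gamma$ in type $A$ (this is harmless for the limit of the $[nt]$-th power); and the uniformity in the starting point that you single out as ``the hard part'' is in fact automatic, because the eigenfunction identity is exact and your correction $R_n(\lambda)$ does not depend on the state $\beta$, so nothing beyond $R_n(\lambda)\to 1$ is required --- this is exactly why the paper's conditioning step goes through with no uniform estimates.
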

 \begin{proof}  
Let $\sigma$ be a dominant weight in $\Lambda_+$. It exists an integer $n_0$ such that $\sigma(\theta^\vee)\le [\sqrt{n}]$, for all $n\ge n_0$. For $n\ge n_0$ and $t\ge 0$, one has,  $$\E\Big[\frac{\chi_{\Lambda_{[nt]}^{(n)}(\sigma)}}{\chi_{\Lambda_{[nt]}^{(n)}(0)}}\Big]=\Big[\frac{\chi_{\gamma}(\sigma)} {\chi_{\gamma}(0)}\Big]^{[nt]},$$
 where the discretized characters are level $[\sqrt{n}]$ discretized characters.
 As  for any $\lambda\in P_+^{[\sqrt{n}]}$, the Weyl character formula implies
 $$\frac{\chi_\lambda(\sigma)}{\chi_\lambda(0)}=\frac{\ch_\sigma(-\nu^{-1}(\frac{\lambda+\rho}{[\sqrt{n}]+h^{\vee}}))}{\ch_\sigma(0)}\frac{\ch_\sigma(0)}{\chi_\sigma(0)}.$$
 one obtains taking the conjugates,
 $$\E\Big[\frac{\ch_{\sigma}(\nu^{-1}(\frac{\Lambda_{[nt]}^{(n)}+\rho}{[\sqrt{n}]+h^\vee}))}{\ch_\sigma(0)}\Big]=\frac{\chi_\sigma(0)}{\ch_\sigma(0)}\Big[\frac{\ch_\sigma(\nu^{-1}(\frac{\gamma+\rho}{[\sqrt{n}]+h^{\vee}}))}{\ch_\sigma(0)}\frac{\ch_\sigma(0)}{\ch_\sigma(\nu^{-1}(\frac{\rho}{[\sqrt{n}]+h^\vee}))}\Big]^{[nt]}.$$
 The central limit theorem for $\mbox{Ad}(K)$-invariant random walks on compact   Lie groups (see \cite{Wehn}) implies that  the right hand side of the identity converges to 
 $$\int_K \psi_\sigma(k)\, \mu_t(k),$$
 where $(\mu_t)_{t\ge 0}$ is the semi-group of a Brownian motion $(b_t)_{t\ge 0}$ on $K$. If we denote by $(a_t)_{t\ge 0}$ the corresponding radial process, one obtains that 
 $$\lim_{n\to \infty}\mathbb{E}(\psi_\sigma(\exp(\nu^{-1}(\frac{1}{\sqrt{n}}\Lambda_{[nt]}^{(n)}))))=\E(\psi_\sigma(\exp(a_t)).$$
It implies that in $K/\mbox{Ad}(K)$, $\exp\big[\frac{1}{\sqrt{n}}\nu^{-1}(\Lambda_{[nt]}^{(n)}\big]$ converges in distribution towards $\exp(a_t)$ as $n$ tends to infinity.
 As the function $\psi_\sigma$ satisfies (\ref{spherical}), a L\'evy process  $(k_t)_{t\ge 0}$ on $K$ satisfies for $s,t\ge 0$
 $$\E(\psi_\sigma(k_{t+s})\vert k_r, r\le s )=\psi_\sigma(k_s)\E(\psi_\sigma(k_t)).$$
Thus the following identity $$\E\Big[\frac{\chi_{\Lambda_{[n(t+s)]}^{(n)}(\sigma)}}{\chi_{\Lambda_{[n(t+s)]}^{(n)}(0)}}\vert \Lambda^{(n)}_{[nr]}, r\le s\Big]=\frac{\chi_{\Lambda_{[ns]}^{(n)}}(\sigma)}{\chi_{\Lambda_{[ns]}}(0)}\Big[\frac{\chi_{\gamma}(\sigma)} {\chi_{\gamma}(0)}\Big]^{[n(t+s)]-[ns]},$$
proves that for any sequences $0\le t_1<\dots<t_m$, and $\sigma_1,\dots,\sigma_m\in\Lambda_+$
$$\lim_{n\to\infty}\mathbb{E}(\prod_{i=1}^m\psi_{\sigma_i}(\exp(\nu^{-1}(\frac{1}{\sqrt{n}}\Lambda_{[nt_i]}^{(n)})))=\mathbb{E}(\prod_{i=1}^m\psi_{\sigma_i}(\exp(a_{t_i})),$$
which implies the lemma. 
\end{proof}
When $K$ is simply connected the lemma   implies that $(\nu^{-1}(\frac{1}{\sqrt{n}}\Lambda_{[nt]}, t\ge 0)$ converges - in the sense of finite dimensional distributions - towards $(a_t)_{ t\ge 0}$. We will show that this convergence holds even when $K$ is not simply connected. For this we'll use a tightness result for the sequence of processes $(\frac{1}{\sqrt{n}}\Lambda^{(n)}_{[nt]},t\ge 0)$. 

 Let  $(\pi_i)_{i\in \N^*}$ be a sequence of i.i.d. random variables such that $\pi_1$ is  uniformly distributed on the Littelmann module $B\pi_\gamma$. We  let $\pi(t)=\pi_1(t)+\pi_2(t)+\dots+\pi_{[t]+1}(t-[t])$, $t\ge 0$. Donsker theorem implies in particular that $(\frac{1}{\sqrt{n}}\pi([nt]),t\ge 0)$ converges in distribution  in $\mathcal{D}(\R_+,\mathfrak{t}^*)$ endowed with the topology of uniform convergence on compact sets. It has been proved in \cite{BBO} that it exists a continuous map $\mathcal{P}_{w_0}$, where $w_0$ is the longest element of $W$, defined from $\mathcal{D}(\R_+,\mathfrak{t}^*)$ to $\mathcal{D}(\R_+,\mathfrak{t}^*)$, such that  the random process $(Y_k,k\ge 0)$ defined by
$$Y_k=\mathcal{P}_{w_0}(\pi)(k), \, k\ge 0,$$ is a Markov chain  living on $P_+$, starting at zero, whose transition  kernel $s_\gamma$ is defined by
$$s_\gamma(x,y)=\frac{\dim(y)}{\dim(x)\dim(\gamma)}M_{x\gamma}^y, \quad x,y\in P_+,$$
where the $M_{x\gamma}^y$'s are the Litlewood-Richardson defined by (\ref{LR}).

\begin{lem} For any $T\in \R_+^*$, there exists a constant $C$ such that for any  $n\in\N$, and any measurable positive function  $f:\mathcal{D}([0,T],\mathfrak{t}^*)\to \R_+,$ 
\begin{align*}
\mathbb{E}(f(\Lambda^{(n)}_{[nt]}, t\in[0,T])))\le C \mathbb{E}(f(Y_{[nt]}, t\in[0,T]))
\end{align*}
\end{lem}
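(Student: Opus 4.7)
The plan is a pointwise comparison of the two transition kernels. Combining the inequality $N_{\lambda,\gamma}^{\beta}\le M_{\lambda,\gamma}^{\beta}$ from (\ref{trunc}) with the definitions
$$q_\gamma(\lambda,\beta) = N_{\lambda,\gamma}^\beta \frac{\chi_\beta(0)}{\chi_\lambda(0)\chi_\gamma(0)}, \qquad s_\gamma(\lambda,\beta) = M_{\lambda,\gamma}^\beta \frac{\dim(\beta)}{\dim(\lambda)\dim(\gamma)},$$
one obtains immediately
$$q_\gamma(\lambda,\beta) \le s_\gamma(\lambda,\beta) \cdot \frac{\chi_\beta(0)/\dim(\beta)}{\chi_\lambda(0)/\dim(\lambda)} \cdot \frac{\dim(\gamma)}{\chi_\gamma(0)}$$
for every $\lambda,\beta\in P_+^{[\sqrt n]}$. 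Both chains start at $0$ (where $\chi_0(0)=\dim(0)=1$) and $P_+^{[\sqrt n]}\subset P_+$, so $\Lambda^{(n)}$ charges a subset of the trajectories that $Y$ does, which makes this comparison legitimate under the sum over paths.

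The key next step is to iterate this inequality along a trajectory $0=\beta_0\to\beta_1\to\cdots\to\beta_N$ with $N=[nT]$. The quotient $\chi_\cdot(0)/\dim(\cdot)$ telescopes, leaving
$$\prod_{k=0}^{N-1} q_\gamma(\beta_k,\beta_{k+1}) \le \frac{\chi_{\beta_N}(0)}{\dim(\beta_N)} \left(\frac{\dim(\gamma)}{\chi_\gamma(0)}\right)^{N} \prod_{k=0}^{N-1} s_\gamma(\beta_k,\beta_{k+1}).$$
Multiplying both sides by $f(\beta_0,\dots,\beta_N)\ge 0$ and summing over all such paths then reduces the lemma to showing that the prefactor $\chi_{\beta_N}(0)/\dim(\beta_N)\cdot(\dim(\gamma)/\chi_\gamma(0))^{[nT]}$ is bounded by a constant $C=C(T,\gamma)$ independent of $n$ and of the endpoint $\beta_N$.

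The first factor satisfies $\chi_{\beta_N}(0)/\dim(\beta_N)\le 1$: comparing the product formulas for $\chi_\lambda(0)$ and $\dim(\lambda)$ term by term, it suffices to observe that $z\mapsto \sin(\pi z/(k+h^\vee))/z$ is decreasing on $(0,k+h^\vee)$ and that $(\lambda+\rho|\alpha)\ge(\rho|\alpha)$ for every $\alpha\in R_+$ and $\lambda\in P_+$; the inequality $(\lambda+\rho|\alpha)<k+h^\vee$ reduces to the case $\alpha=\theta$ via the maximality property of the highest root and then to the defining inequality $\lambda(\theta^\vee)\le k$. For the second factor, the Taylor expansion $\sin(x)/x=1-x^2/6+O(x^4)$ applied in the product formula (\ref{DF1}) at level $k=[\sqrt n]$ with $\gamma$ \emph{fixed} yields $\chi_\gamma(0)/\dim(\gamma)=1-O(1/n)$, hence $(\dim(\gamma)/\chi_\gamma(0))^{[nT]}\le e^{O(T)}$.

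The main obstacle is the uniformity of this last estimate: the exponent $[nT]$ grows linearly in $n$, and one must balance it against a relative error of order $1/k^2$ at level $k=[\sqrt n]$. Because the scaling $k\sim\sqrt n$ makes $1/k^2\sim 1/n$, the accumulated multiplicative error is of order $nT\cdot 1/n=T$, which is precisely what is needed to conclude. This delicate balance is what forces the choice of level $[\sqrt{n}]$ and of time-scaling $[nt]$ in the statement of Theorem \ref{ConvD}.
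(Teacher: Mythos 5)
Your proof is correct and takes essentially the same route as the paper's: the pointwise kernel comparison via inequality (\ref{trunc}), the telescoping of $\chi_{\cdot}(0)/\dim(\cdot)$ along trajectories starting at $0$, and the control of the endpoint factor together with the convergence of $\bigl[\dim(\gamma)/\chi_\gamma(0)\bigr]^{[nT]}$ at level $[\sqrt{n}]$. Your only departures are quantitative refinements — the sharp bound $\chi_x(0)/\dim(x)\le 1$ via monotonicity of $z\mapsto \sin(\pi z/(k+h^\vee))/z$, where the paper settles for uniform boundedness, and the explicit Taylor expansion justifying the convergence of the $[nT]$-th power — which do not change the argument.
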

\begin{proof} Using the inequality (\ref{trunc}), one obtains 
\begin{align*}
\mathbb{E}(f(\Lambda^{(n)}_0,\dots,\Lambda^{(n)}_{[nT]}))\le \mathbb{E}(f(Y_0,\dots,Y_{[nT]}) \frac{\vert\chi_{Y_{[nT]}}(0) \vert}{\dim (Y_{[nT]})}\big[\frac{\dim(\gamma)}{\chi_\gamma(0)}\big]^{[nT]}).
\end{align*}
As for $x\in P_+^k$,
\begin{align*} 
 \frac{ \chi_{x}(0)  }{\dim( x)}= \prod_{\alpha\in R_+}\frac{\sin\big(\pi\frac{(x+\rho\vert\alpha)}{[\sqrt{n}]+h^\vee}\big)}{\frac{(x+\rho\vert\alpha)}{[\sqrt{n}]+h^\vee}}  \frac{\frac{(\rho\vert\alpha)}{[\sqrt{n}]+h^\vee}}{\sin\big(\pi\frac{(\rho\vert\alpha)}{[\sqrt{n}]+h^\vee}\big)},
\end{align*}
$\vert\frac{ \chi_{x}(0)  }{\dim (x)}\vert $ is uniformly bounded in $x\in\mathfrak{t}^*$ and $n\in \N^*$.
As $\big[\frac{\dim (\gamma)}{\chi_\gamma(0)}\big]^{[nT]}$ converges when $n$ goes to infinity, it exists a constant $C$ such that for any $x\in\mathfrak{t}^*$ and $n\in \N^*$
$$\frac{\vert\chi_{x}(0) \vert}{\dim x}\big[\frac{\dim \gamma}{\chi_\gamma(0)}]^{[nT]}\le C,$$
which proves the lemma.
\end{proof}
As $\mathcal{P}_{w_0}$ is a continuous map which commutes with the scaling, the sequence of processes $(\frac{1}{\sqrt{n}}Y_{[nt]},t\ge 0)$ converges in $\mathcal{D}(\R_+,\mathfrak{t}^*)$ endowed with the topology of uniform convergence on compact sets. Thus it satisfies the tightness  property of the following proposition which is consequently - thanks to the previous lemma - also proved to be satisfied by the sequence of processes $(\frac{1}{\sqrt{n}}\Lambda_{[nt]}^{(n)},t\ge 0)$. Thus we have the following proposition.
\begin{prop} \label{tight} For any $T,\eta,\epsilon>0$ there exists $\delta>0$ such that 
$$\forall n\in\N^*,\quad \P(\sup_{\tiny{
\begin{array}{c}
  0\le t,t'\le T   \\
\vert t-t'\vert \le \delta
\end{array}}}\vert\frac{1}{\sqrt{n}}\Lambda_{[nt]}^{(n)}-\frac{1}{\sqrt{n}}\Lambda_{[nt']}^{(n)}\vert\ge \eta)\le \epsilon.$$
\end{prop}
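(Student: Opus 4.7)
The plan is to reduce tightness of $(\frac{1}{\sqrt n}\Lambda^{(n)}_{[nt]})$ to the corresponding property for the auxiliary process $(\frac{1}{\sqrt n} Y_{[nt]})$, using the stochastic-domination inequality proved in the lemma that immediately precedes the proposition.

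First I would fix $T,\eta,\epsilon>0$ and observe that the event in the proposition is of the form $\{F_n(\Lambda^{(n)}_{[n\cdot]})=1\}$ for the Borel-measurable, nonnegative indicator
$$F_n(\omega) = \mathbf{1}\bigl\{\sup_{0\le t,t'\le T,\,|t-t'|\le\delta} |\omega(t)-\omega(t')|\ge \eta\sqrt{n}\bigr\}$$
on the path space $\mathcal{D}([0,T],\mathfrak{t}^*)$. Applying the preceding lemma to this $F_n$ yields
$$\P\Bigl(\sup_{|t-t'|\le\delta}\bigl|\tfrac{1}{\sqrt n}\Lambda^{(n)}_{[nt]}-\tfrac{1}{\sqrt n}\Lambda^{(n)}_{[nt']}\bigr|\ge\eta\Bigr)\le C\,\P\Bigl(\sup_{|t-t'|\le\delta}\bigl|\tfrac{1}{\sqrt n}Y_{[nt]}-\tfrac{1}{\sqrt n}Y_{[nt']}\bigr|\ge\eta\Bigr),$$
where $C$ is the universal constant provided by that lemma. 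The problem is thereby reduced to establishing the uniform modulus-of-continuity estimate for the sequence $(\frac{1}{\sqrt n} Y_{[nt]},\, t\in[0,T])$.

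For this I would invoke the paragraph preceding the proposition: Donsker's theorem gives the convergence of $(\frac{1}{\sqrt n}\pi([nt]))$ in $\mathcal{D}(\R_+,\mathfrak{t}^*)$ with the topology of uniform convergence on compact sets, and since $\mathcal{P}_{w_0}$ is continuous and commutes with Brownian scaling, the same convergence carries over to $(\frac{1}{\sqrt n} Y_{[nt]})$. The limit is a continuous process, so the convergence in fact takes place in the space of continuous paths on $[0,T]$ with the supremum norm, and the laws form a relatively compact family there. By the Arzel\`a-Ascoli criterion this relative compactness is equivalent to the uniform modulus-of-continuity condition we want: for every $\epsilon'>0$ there exists $\delta>0$ such that, uniformly in $n$, the right-hand probability above is at most $\epsilon'$. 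Choosing $\epsilon'=\epsilon/C$ closes the argument.

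I do not anticipate a serious obstacle—everything rests on the two ingredients just described. The one point that requires care is to work with the uniform rather than Skorokhod topology throughout, so that the relative compactness of $(\frac{1}{\sqrt n}Y_{[nt]})$ really delivers a uniform (in $n$) bound on the modulus of continuity; this is automatic here because the scaling limit, a radial Brownian motion on $K$, has continuous sample paths.
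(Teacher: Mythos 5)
Your argument is correct and is essentially the paper's own proof: the paper likewise deduces tightness of $(\frac{1}{\sqrt{n}}Y_{[nt]})_{n\ge 1}$ from Donsker's theorem together with the continuity and scaling-equivariance of $\mathcal{P}_{w_0}$, and then transfers the modulus-of-continuity bound to $(\frac{1}{\sqrt{n}}\Lambda^{(n)}_{[nt]})_{n\ge 1}$ via the preceding domination lemma, whose constant $C$ is uniform in $n$ and in the test function (so applying it to your $n$-dependent indicator $F_n$ is legitimate). Your closing remark about working in the uniform rather than Skorokhod topology is exactly the point the paper handles implicitly by invoking convergence in $\mathcal{D}(\R_+,\mathfrak{t}^*)$ with the topology of uniform convergence on compact sets toward a continuous limit.
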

\noindent {\bf Proof of theorem \ref{ConvD}} Suppose that a subsequence of $(\frac{1}{\sqrt{n}}\Lambda^{(n)}_{[nt]},t\ge 0)_{n\ge 0}$ converges towards a process $X$. Lemma \ref{findim} implies that in $K/ \mbox{Ad}(K)$, $(\exp(X_t),t\ge 0)$ has the same finite dimensional distributions as $(\exp(a_t),t\ge 0)$. As $\max_k(\vert\vert \Lambda^{(n)}_{k+1}-\Lambda^{(n)}_k\vert \vert)$ is bounded, theorem 10.2 of \cite{EthierKurtz} shows that $X$ has continuous  trajectories, which implies (see discussion above) that $(X_{t})_{t\ge 0}$ as the same law as $(a_t)_{t\ge 0}$. The theorem follows, as $(\frac{1}{\sqrt{n}}\Lambda^{(n)}_{[nt]},t\ge 0)_{n\ge 0}$  is tight.

\end{document}